\newtheorem{thm}{Theorem}[section]
\newtheorem{lem}[thm]{Lemma}
\theoremstyle{definition}
\numberwithin{equation}{section} \theoremstyle{remark}
\newtheorem{rem}{Remark}[section]
\title{{\bf Weak convergence of path-dependent SDEs with irregular coefficients}
\thanks{Supported in
 part by NNSFs of China (Nos. 11771327, 11301030, 11431014, 11831014).} }
\author{
{\bf  Jianhai Bao$^{b,c)}$,  Jinghai Shao$^{a)}$}\\
\footnotesize{$^{a)}$Center for Applied Mathematics, Tianjin
University, Tianjin 300072, China}\\
\footnotesize{$^{b)}$School of Mathematics and Statistics, Central
South
University, Changsha 410083, China}\\
\footnotesize{$^{c)}$Department of Mathematics, Swansea University,
Singleton Park, SA2 8PP, UK}\\ \footnotesize{
jianhaibao@csu.edu.cn~~~~ shaojh@bnu.edu.cn }}
\begin{document}
\def\R{\mathbb R}  \def\ff{\frac} \def\ss{\sqrt} \def\B{\mathbf
B}
\def\N{\mathbb N} \def\kk{\kappa} \def\m{{\bf m}}
\def\dd{\delta} \def\DD{\Delta} \def\vv{\varepsilon} \def\rr{\rho}
\def\<{\langle} \def\>{\rangle} \def\GG{\Gamma} \def\gg{\gamma}
  \def\nn{\nabla} \def\pp{\partial} \def\EE{\scr E}
\def\d{\text{\rm{d}}} \def\bb{\beta} \def\aa{\alpha} \def\D{\scr D}
  \def\si{\sigma} \def\ess{\text{\rm{ess}}}
\def\beg{\begin} \def\beq{\begin{equation}}  \def\F{\scr F}
\def\Ric{\text{\rm{Ric}}} \def\Hess{\text{\rm{Hess}}}
\def\e{\text{\rm{e}}} \def\ua{\underline a} \def\OO{\Omega}  \def\oo{\omega}
 \def\tt{\tilde} \def\Ric{\text{\rm{Ric}}}
\def\cut{\text{\rm{cut}}} \def\P{\mathbb P} \def\ifn{I_n(f^{\bigotimes n})}
\def\C{\scr C}      \def\aaa{\mathbf{r}}     \def\r{r}
\def\gap{\text{\rm{gap}}} \def\prr{\pi_{{\bf m},\varrho}}  \def\r{\mathbf r}
\def\Z{\mathbb Z} \def\vrr{\varrho} \def\ll{\lambda}
\def\L{\scr L}\def\Tt{\tt} \def\TT{\tt}\def\II{\mathbb I}
\def\i{{\rm in}}\def\Sect{{\rm Sect}}\def\E{\mathbb E} \def\H{\mathbb H}
\def\M{\scr M}\def\Q{\mathbb Q} \def\texto{\text{o}} \def\LL{\Lambda}
\def\Rank{{\rm Rank}} \def\B{\scr B} \def\i{{\rm i}} \def\HR{\hat{\R}^d}
\def\to{\rightarrow}\def\l{\ell}
\def\8{\infty}\def\X{\mathbb{X}}\def\3{\triangle}
\def\V{\mathbb{V}}\def\M{\mathbb{M}}\def\W{\mathbb{W}}\def\Y{\mathbb{Y}}\def\1{\lesssim}

\def\La{\Lambda}\def\S{\mathbf{S}}

\renewcommand{\bar}{\overline}
\renewcommand{\hat}{\widehat}
\renewcommand{\tilde}{\widetilde}
\newcommand{\scr}[1]{\mathscr #1}
\newcommand{\lra}{\longrightarrow}
\def\ua{\uparrow}
\newcommand{\da}{\downarrow}
\newcommand{\dsp}{\displaystyle}
 \maketitle

\begin{abstract}
In this paper we develop via Girsanov's transformation a
perturbation argument to investigate weak convergence of
Euler-Maruyama (EM) scheme for path-dependent SDEs with  H\"older
continuous drifts. This approach is available to other scenarios,
e.g., truncated EM schemes for non-degenerate SDEs with finite
memory or infinite memory. Also, such trick can be applied to study
weak convergence  of truncated EM scheme for a range of
 stochastic Hamiltonian systems with irregular coefficients and with
 memory, which  are typical degenerate dynamical systems. Moreover, the weak convergence of path-dependent SDEs
 under integrability condition is investigated by establishing,
via the dimension-free Harnack inequality,   exponential
integrability of irregular drifts w.r.t.
 the invariant probability measure constructed explicitly in
 advance.

\end{abstract}
\noindent
 AMS Subject Classification:\  60H35, 65C05, 65C30  \\
\noindent
 Keywords: EM scheme, truncated EM scheme, 
 H\"older continuity, integrability condition, stochastic Hamiltonian system
 \vskip 2cm

\section{Introduction}
The strong/weak convergence of numerical schemes for SDEs with
 regular coefficients has been investigated extensively; see, e.g.,
\cite{BT,KP1,KP2,GR,NT1,NT,PT,TT} and reference therein.  Also, the weak convergence
for SDEs with irregular terms has gained much attention; see,
e.g., \cite{KP,MP} with the payoff function being smooth.
For path-dependent SDEs
(which, in terminology, are also named as functional SDEs or SDEs
with delays),
there is
considerable literature on strong convergence of various numerical
schemes (e.g.,  truncated/tamed EM scheme) under regular conditions; see, for instance,
\cite{DKS,Mao15} and references within.
In contrast, weak convergence analysis of numerical algorithms for path-dependent SDEs is scarce. The path-dependent SDEs under irregular conditions are much more difficult than SDEs under irregular conditions. This work deals with the weak approximation of numerical algorithms for path-dependent SDEs under H\"older continuity condition or certain integrability condition.

As far as path-dependnet SDEs are concerned, the weak convergence of
numerical methods was initiated in \cite{KP} whereas the rigorous
justification of their statements was unavailable. With regard to
weak convergence of EM scheme and its variants,  we refer to
\cite{BS} for a class of semi-linear path-dependnet SDEs   via the
so-called ``lift-up'' approach, \cite{CKL} for path-dependnet SDEs
with distributed delays by means of the duality trick, and
\cite{BKMS} for path-dependent SDEs with point delays with the help
of Malliavin calculus and the tamed It\^o formula. In the references
\cite{BKMS,CKL}, as for the drift term $b$ and the diffusion term
$\si$, the assumptions that $b,\si\in C_b^\8(\R^d)$ and the payoff
function $f\in C_b^3(\R^d)$ were imposed. Subsequently, by the aid
of Malliavin calculus, \cite{Zhang} extended \cite{BKMS,CKL} in a
certain sense that the payoff function $f\in\mathscr{B}_b(\R^d)$
while $b,\si\in C_b^\8(\R^d)$ therein. It is worthy of pointing out
that the approaches adopted in \cite{BKMS,CKL,Zhang} are applicable
merely for path-dependent SDEs with regular coefficients. In the
literature \cite{BKMS,Zhang}, the tamed It\^o formula plays a
crucial role in investigating weak convergence of EM scheme.
Nevertheless, the tamed It\^o formula works barely for
path-dependent SDEs with distributed delays or point delays so that
it seems hard to extend \cite{BKMS,Zhang} to path-dependent SDEs
with general delays.

To study weak convergence of numerical schemes
for path-independent SDEs with regular coefficients, the approach on
the Kolmogorov backward equation is one of the powerful methods.
However, concerning path-dependent SDEs, the Kolmogrov backward
equation is in general unavailable so that it cannot be adopted to
deal with weak convergence of numerical schemes. As we stated above,
concerning path-dependent SDEs, the Malliavin calculus is the
effective tool to cope with weak convergence; see, for example,
\cite{BKMS,CKL,Zhang}. Whereas,   a little bit strong assumptions
are imposed therein and the proof is not succinct in certain sense.
Moreover, Zvonkin's trasformation \cite{AZ} is one of the powerful
tools to investigate  strong convergence of EM schemes for
path-independent SDEs with singular coefficients; see, e.g.,
\cite{PT}. Nevertheless, such trick no longer works for path-dependent SDEs provided the appearance of the
delay terms.

In this
work we aim to develop a perturbation approach to study weak
convergence of (truncated) EM scheme for path-dependent SDEs with additive
noise, which allows the drift terms to be irregular (e.g.,
H\"older continuous drifts and integrability drifts) and even the
diffusion coefficients to be degenerate.
Elaborate estimation of the growth of a stochastic process under H\"older continuity condition and the application of the
dimension-free Harnack inequality under integrability condition play the crucial role in current work.

The content of this paper is arranged as follows. In Section
\ref{sec1}, we investigate weak convergence of EM scheme for a class
of non-degenerate SDEs with memory and reveal the weak convergence
rate; In Section \ref{sec3}, we apply the approach adopted in
Section \ref{sec1} to other scenarios, e.g., truncated EM scheme for
non-degenerate SDEs with finite memory or infinite memory; In
Section \ref{sec4}, we focus on weak convergence and reveal the weak
convergence order of truncated EM scheme for a range of stochastic
Hamiltonian systems with singular drifts and with memory; In the
last section, we are interested in weak convergence of EM scheme for
path-dependent SDEs under integrability conditions, which allow  the
drift terms to be singular.

 Before proceeding further, a few
words about the notation are in order. Throughout this paper, $c>0$
stands for a generic constant which might change from occurrence to
occurrence and depend on the time parameters.

\section{Weak Convergence: Non-degenerate Case }\label{sec1}
Let $(\R^d,\<\cdot,\cdot\>,|\cdot|)$ be the $d$-dimensional
Euclidean space with the inner product $\<\cdot,\cdot\>$ which
induces the norm $|\cdot|$. Let $\mathbb{M}_{\mathrm{non}}^{d}$   be the
set of all non-singular $d\times d$-matrices with real entries,
equipped with the Hilbert-Schmidt norm $\|\cdot\|_{\rm HS}$. $A^*$
means the transpose of the matrix $A.$ For a sub-interval
$\mathbb{U}\subseteq\R$, denote $ C(\mathbb{U};\R^d)$ by the family
of all continuous functions $f:\mathbb{U}\rightarrow\R^d$. Let
$\tau>0$ be a fixed number and
  $\C=C([-\tau,0];\R^d)$, which is endowed with the uniform norm
 $\|f\|_\8:=\sup_{-\tau\le
\theta\le 0}|f(\theta)|$. For   $f\in C([-\tau,\8);\R^d)$ and  fixed
$t\ge0$, let $f_t\in \C$ be defined by $f_t(\theta)=f(t+\theta),
\theta\in[-\tau,0].$ In terminology, $(f_t)_{t\ge0}$ is called the
segment (or window) process corresponding to $(f(t))_{t\ge-\tau}$.
For $a\ge0,$ $\lfloor a\rfloor$ stipulates the integer part of $a.$
Let $\B_b(\R^d)$ be the collection of all bounded measurable
functions $f:\R^d\to\R$, endowed with the norm $\interleave  f
\interleave_\8:=\sup_{x\in\R^d}|f(x)|$. Let ${\bf0}\in\R^d$ be the
zero vector and $\xi_0(\theta)\equiv{\bf 0}$ for any
$\theta\in[-\tau,0].$

In this  section, we are interested in the following path-dependent
SDE
\begin{equation}\label{eq1}
\d X(t)=\{b\,(X(t))+Z(X_t)\}\d t+\si\,\d
W(t),~~~t>0,~~~X_0=\xi\in\C,
\end{equation}
where $b:\R^d\to\R^d$, $Z:\C\rightarrow\R^d$,
$\si\in\mathbb{M}_{\mathrm{non}}^{d}$ and $(W(t))_{t\ge0}$ is a
$d$-dimensional Brownian motion on the probability space
$(\OO,\F,(\F_t)_{t\ge0},\P)$. We assume that
\begin{enumerate}
\item[({\bf A1})] $b$ is Lipschitz with the Lipschitz constant $L_1$, i.e.,
 $ |b(x)-b(y)|\le L_1|x-y|,~x,y\in\R^d, $ and there exist constants
  $C>0$ and $\bb\in\R$ such that
 \begin{equation}\label{eq13}
2\<x,b(x)\>\le C+\bb|x|^2,~~~~x\in\R^d;
 \end{equation}

\item[({\bf A2})] $Z $ is H\"older continuous with the H\"older exponent
$\aa\in(0,1]$ and the H\"older constant $L_2$, i.e., $
|Z(\xi)-Z(\eta)|\le L_2\|\xi-\eta\|^\aa_\8,~\xi,\eta\in\C; $

\item[({\bf A3})] The initial value $\xi\in\C$ is Lipschitz
continuous with the Lipschitz constant $L_3>0$, i.e., $
|\xi(t)-\xi(s)|\le L_3|t-s|,~s,t\in[-\tau,0]. $

\end{enumerate}
Under ({\bf A1}) and ({\bf A2}),  \eqref{eq1} enjoys a unique weak
solution $(X^\xi(t))_{t\ge0}$ with the initial datum
$X_0^\xi=\xi\in\C;$ see Lemma \ref{weak} below for more details.
Evidently, \eqref{eq13} holds with $\beta>0$ whenever $b$ obeys the
global Lipschitz condition. It is worthy to emphasize that $\beta$
in \eqref{eq13} need not to be positive, which may allow  the time
horizontal $T$ to be much bigger as Lemma \ref{novikov} below
manifests. Moreover,   ({\bf A3}) is just imposed for the sake of
continuity of the displacement of segment process. For further
details, please refer to Lemma \ref{con} below.

For existence and uniqueness of strong solutions to path-dependent
SDEs with regular coefficients, we refer to e.g. \cite{M08,M84,VS}
and references therein. Recently, path-dependnet SDEs with irregular
coefficients have also received much attention; see e.g. \cite{Bach}
on existence and uniqueness of strong solutions, \cite{Bachm} upon
strong Feller property of the semigroup generated by the functional
solution (i.e., the segment process associated with the solution
process), and \cite{Wang} about  the regularity estimates for the
density of invariant probability measures.

 Let $\dd\in(0,1)$ be  the stepsize given by
$\dd=\tau/M$ for some $M\in\mathbb{N}$ sufficiently large. Given the
stepsize $\dd\in(0,1)$, the continuous-time EM scheme associated
with \eqref{eq1} is defined as below
\begin{equation}\label{eq2}
\d X^{(\dd)}(t)=\{b(X^{(\dd)}(t_\dd))+Z(\hat X_{t_\dd}^{(\dd)})\}\d
t+\si\,\d W(t),~~~t>0
\end{equation}
with the initial value $X^{(\dd)}(\theta)=X(\theta)=\xi(\theta),
\theta\in[-\tau,0].$ Herein,
 $t_\dd:=\lfloor t/\dd\rfloor\dd$ and,   for any $k\in\mathbb{N},$ $\hat X_{k\dd}^{(\dd)}\in\C$ is  defined by
\begin{equation}\label{w6}
\hat
X_{k\dd}^{(\dd)}(\theta)=\ff{\theta+(1+i)\dd}{\dd}X^{(\dd)}((k-i)\dd)-\ff{\theta+i\dd}{\dd}
X^{(\dd)}((k-i-1)\dd)
\end{equation}
whenever $\theta\in[-(i+1)\dd,-i\dd]$ for
$i\in\S:=\{0,1,\cdots,M-1\}$, that is, the $\C$-valued process
$(\hat X_{k\dd}^{(\dd)})_{k\in\mathbb{N}}$ is constructed by the
linear interpolations between the points on the gridpoints.

To cope with the weak convergence of  EM scheme \eqref{eq2} with the
singular coefficient $Z$, in this work we shall adopt a perturbation
approach; see e.g. \cite{Wanga,Wang} on regularity estimates of
density of invariant probability measures for SDEs under
integrability conditions. To achieve this goal, we introduce the
following reference SDE on $\R^d$
\begin{equation}\label{eq3}
\d Y(t)=b(Y(t))\d t+\si\,\d W(t),~~~t>0,~~~Y(0)=x\in\R^d.
\end{equation}
Under ({\bf A1}), \eqref{eq3} has a unique strong solution
$(Y^x(t))_{t\ge0}$ with the initial value $Y(0)=x$; see, for
example,  \cite[Theorem 2.1, p34]{M84}. Now, let's extend $Y^x(t)$,
solving \eqref{eq3}, from $[0,\8)$ into $[-\tau,\8)$ in the
  manner below:
\begin{equation}\label{a6}
Y^\xi(t):=\xi(t){\bf 1}_{[-\tau,0)}(t)+Y^{\xi(0)}(t){\bf
1}_{[0,\8)}(t),~~~~t\in[-\tau,\8),~~~\xi\in\C.
\end{equation}
Let $(Y^\xi_t)_{t\ge0}$ be the segment process corresponding to
$(Y^\xi(t))_{t\ge-\tau}$.

Our  main result in this section is stated as follows, which in
particular reveals the weak convergence rate of EM algorithm
\eqref{eq2} associated with \eqref{eq1}, which allows the drift term
to be   H\"older continuous.

\begin{thm}\label{th1}
Let $({\bf A1})$, $({\bf A2})$ and $({\bf A3})$ hold. Then, for any
$\kappa\in(0,\aa/2)$   and $T>0$ such that
\begin{equation}\label{eq19}
2\,\|\si\|_{\rm HS}^2 \|\si^{-1}\|^2_{\rm HS}\{(4L_1^2+ L_2^2){\bf
1}_{\{\aa=1\}}+L_1^2{\bf1}_{\{\aa\in(0,1)\}}\}< \e^{-(1+\beta
T)}/T^2,
\end{equation}
there exists a constant $C_{1,T}>0$ such that
\begin{equation}\label{w10}
|\E f(X(t))-\E f(X^{(\dd)}(t))|\le
C_{1,T}\,\dd^{\kk},~~~~f\in\B_b(\R^d),~~t\in[0,T].
\end{equation}
\end{thm}

\begin{rem}
{\rm For  the path-independent SDE \eqref{eq1} with H\"older
continuous drift, \cite{NT} revealed the weak convergence order is
$\ff{\aa}{2}\wedge\ff{1}{4}$, where   $\aa\in(0,1)$ is the H\"older
exponent. Whereas, in Theorem \ref{th1}, we demonstrate that  the
weak convergence rate is $\aa/2.$ So Theorem \ref{th1} is new even
for path-independent SDEs with irregular drifts. For path-dependent
SDEs with point delays or distributed delays, \cite{BKMS,CKL}
investigated
 the weak convergence   under the regular assumption $Z\in C_b^\8$ and with the payoff function $f\in
C_b^3$. Nevertheless, in the present work, we might allow the drift
$Z$ to be   unbounded  and even    H\"older continuous  and most
importantly the payoff function $f$ to be   non-smooth. Hence,
Theorem \ref{th1} improves e.g. \cite{BKMS,CKL,NT,Zhang} in a
certain sense. Last but not least, the approached adopted to prove
Theorem \ref{th1} is universal in a sense that it is applicable to
the other scenarios as show in the Sections \ref{sec3} and
\ref{sec4}. }
\end{rem}

Before we move forward to complete the proof of Theorem \ref{th1},
let's prepare some warm-up lemmas. The following lemma address
existence and uniqueness of weak solutions to \eqref{eq1}.

\begin{lem}\label{weak}
{\rm Under  ({\bf A1}) and ({\bf A2}), \eqref{eq1} admits a unique
weak solution. }
\end{lem}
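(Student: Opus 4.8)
The plan is to establish existence and uniqueness of weak solutions to the path-dependent SDE \eqref{eq1} via Girsanov's transformation, treating the irregular drift $Z(X_t)$ as a perturbation of the reference SDE \eqref{eq3} whose coefficients satisfy only the Lipschitz condition ({\bf A1}). First I would consider the auxiliary equation obtained by dropping the memory term, namely
\begin{equation*}
\d \tilde X(t)=b(\tilde X(t))\,\d t+\si\,\d \tilde W(t),\qquad \tilde X_0=\xi,
\end{equation*}
which under ({\bf A1}) admits a unique strong (hence weak) solution by standard theory (cf.\ \cite[Theorem 2.1, p34]{M84}), since $b$ is globally Lipschitz and $\si$ is a constant non-singular matrix. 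The process $(\tilde X_t)_{t\ge0}$ is then a well-defined $\C$-valued segment process on some filtered probability space, and $Z(\tilde X_t)$ is a well-defined adapted process because $Z$ is continuous by ({\bf A2}).

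The key step is to define the Girsanov density that shifts the drift from $b$ to $b+Z$. Since $\si\in\M_{\mathrm{non}}^d$ is invertible, I would set
\begin{equation*}
R(t)=\exp\Bigl(\int_0^t\<\si^{-1}Z(\tilde X_s),\d \tilde W(s)\>-\ff12\int_0^t|\si^{-1}Z(\tilde X_s)|^2\,\d s\Bigr).
\end{equation*}
The crux is to verify that $(R(t))_{t\in[0,T]}$ is a genuine martingale (not merely a local one), so that Novikov's criterion or a suitable variant applies; this is exactly the role played by Lemma \ref{novikov} referenced in the text, and I expect that lemma supplies the required exponential integrability of $\int_0^T|\si^{-1}Z(\tilde X_s)|^2\,\d s$. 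Granting this, define a new measure $\Q$ on $\F_T$ by $\d\Q=R(T)\,\d\P$. By Girsanov's theorem, under $\Q$ the process
\begin{equation*}
W(t):=\tilde W(t)-\int_0^t\si^{-1}Z(\tilde X_s)\,\d s
\end{equation*}
is a $d$-dimensional Brownian motion, and rewriting the dynamics of $\tilde X$ shows that $\d \tilde X(t)=\{b(\tilde X(t))+Z(\tilde X_t)\}\,\d t+\si\,\d W(t)$ with $\tilde X_0=\xi$; hence $(\tilde X,W)$ under $\Q$ is a weak solution of \eqref{eq1}.

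For uniqueness, I would argue that weak uniqueness for \eqref{eq1} is equivalent, via the inverse Girsanov transformation, to weak uniqueness for the reference equation \eqref{eq3}. Concretely, given any weak solution $(X,W,\Q)$ of \eqref{eq1}, the exponential $R(t)^{-1}$ built from $-\si^{-1}Z(X_s)$ is again a martingale by the same integrability estimate, and the measure change $\d\P=R(T)^{-1}\d\Q$ converts $X$ into a weak solution of \eqref{eq3}; since \eqref{eq3} has a unique strong solution and therefore enjoys weak uniqueness, the law of $X$ under $\Q$ is pinned down uniquely, giving weak uniqueness of \eqref{eq1}. The main obstacle, as noted, is the integrability verification making $R$ a true martingale: because $Z$ is only H\"older continuous by ({\bf A2}) it may be unbounded, so Novikov's condition cannot be checked directly and one must instead control $\E\exp(\frac12\int_0^T|\si^{-1}Z(\tilde X_s)|^2\d s)$ through growth estimates on the solution of \eqref{eq3}, which is precisely the content I would import from Lemma \ref{novikov} together with the moment bounds guaranteed by the dissipativity-type inequality \eqref{eq13} in ({\bf A1}).
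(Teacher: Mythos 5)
Your proposal is correct and follows essentially the same route as the paper: existence via a Girsanov shift of the reference SDE \eqref{eq3} with the Novikov condition supplied by the exponential integrability of $\int_0^T|\si^{-1}Z(Y^\xi_t)|^2\,\d t$ (Lemmas \ref{novikov} and \ref{le1}), and weak uniqueness via the inverse transformation reducing to pathwise uniqueness for \eqref{eq3} (the paper outsources this step to \cite[Proposition 2.1, p169, \& Corollary, p206]{IW} and the argument of \cite[Theorem 2.1 (2)]{Wang}, which is exactly your reverse-Girsanov reduction). The only points you gloss over, at the same level of informality as the paper itself, are the inductive extension of the construction from $[0,T]$ to $[T,2T],[2T,3T],\dots$ when $\aa=1$ forces $T$ to be small, and the fact that the reverse-direction Novikov condition must be checked under the law of an arbitrary weak solution rather than that of the reference process.
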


\begin{proof}
First of all, we show existence of a weak solution to \eqref{eq1}.
Set
\begin{equation*}
R^\xi_1(t):=\exp\Big(\int_0^t\<\si^{-1}Z(Y_s^\xi),\d
W(s)\>-\ff{1}{2}\int_0^t|\si^{-1}Z(Y_s^\xi)|^2\d s\Big),~~~t\ge0,
\end{equation*}
and $\d\mathbb{Q}^\xi_1:=R^\xi_1(T)\d \P$, where $T>0$ satisfies $
 \|\si\|_{\rm HS}^2 \|\si^{-1}\|_{\rm HS}^2L_2^2< \e^{-(1+\beta
T)}/T^2 $ for the setup of the H\"older exponent $\aa=1$ and $T>0$
is arbitrary with $\aa\in(0,1).$ Moreover, let
\begin{equation}\label{w22}
 W_1^\xi(t) =W(t)-\int_0^t\si^{-1}Z(Y_s^\xi)\d s,~~~~t\ge0.
\end{equation}
 According to Lemma \ref{novikov} below, we infer
that
\begin{equation*}
\E\,\e^{\ff{1}{2}\int_0^T|\si^{-1}Z(Y_t^\xi)|^2\d t}<\8,
\end{equation*}
that is, the Novikov condition holds true. Thus the Girsanov theorem
implies that $(W_1^\xi(t))_{t\in[0,T]}$ is a Brownian motion under
the weighted probability measure $\mathbb{Q}^\xi_1.$ Note that
\eqref{eq3} can be reformulated as
\begin{equation*}
\d Y^\xi(t)  =\{b(Y^\xi(t))+Z(Y_t^\xi)\}\d t+\si\,\d
W_1^\xi(t),~~~t\in[0,T],~~~Y_0^\xi=\xi.
\end{equation*}
So $(Y^\xi(t),W_1^\xi(t))_{t\in[0,T]}$ is a weak solution to
\eqref{eq1} under the probability space
$(\OO,\F,(\F_t)_{t\ge0},\mathbb{Q}^\xi_1)$. Analogously, we can show
inductively that \eqref{eq1} admits a weak solution on $[T,2T],
[2T,3T], \cdots$. Hence, \eqref{eq1} admits a global weak solution.

Now we proceed to justify  uniqueness of weak solutions to
\eqref{eq1}. In the sequel, it is sufficient to show the weak
uniqueness on the time interval $[0,T]$ since it can be done
analogously on $[T,2T], [2T,3T], \cdots$.
 Let $(X^{(i),\xi}(t), W^{(i)}(t))_{t\in[0,T]}$ be the
weak solution to \eqref{eq1} under the  probability space
$(\OO^{(i)},\F^{(i)},(\F_t^{(i)})_{t\ge0},\P_i^\xi), i=1,2.$ In
terms of \cite[Proposition 2.1, p169, \& Corollary, p206]{IW},   it
remains to show that
\begin{equation}\label{w4}
\E_{\P^\xi_1}f(X^{(1),\xi}([0,T]),W^{(1)}([0,T]))=\E_{\P^\xi_2}f(X^{(2),\xi}([0,T]),W^{(2)}([0,T]))
\end{equation}
for any $f\in C_b(C([0,T];\R^d)\times C([0,T];\R^d);\R)$, where
$\E_{\P^\xi_i}$ means the expectation w.r.t.  $\P^\xi_i$. Whereas
\eqref{w4} can be done exactly by following  the argument of
\cite[Theorem 2.1 (2)]{Wang}. We therefore complete the proof.
\end{proof}

The lemma below examines the exponential integrability of
functionals for segment process.

\begin{lem}\label{novikov}
{\rm Assume that ({\bf A1}) holds. Then, for any $T>0$,
\begin{equation}\label{eq11}
\E\,\e^{\ll\int_0^T\|Y_t^\xi\|^2_\8\d
t}<\8,~~~~\ll<\ff{\e^{-(1+\beta T)}}{2\,\|\si\|_{\rm HS}^2 T^2}.
\end{equation}
}
\end{lem}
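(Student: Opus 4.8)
The plan is to bound the exponential moment $\E\,\e^{\ll\int_0^T\|Y_t^\xi\|^2_\8\d t}$ by controlling the supremum norm $\|Y_t^\xi\|_\8$ in terms of the running supremum of the reference process $Y^{\xi(0)}$ solving \eqref{eq3}. Since $Y_t^\xi(\theta)=Y^\xi(t+\theta)$ and the segment incorporates the initial datum $\xi$ on $[-\tau,0)$, I would first observe that $\|Y_t^\xi\|_\8\le\|\xi\|_\8\vee\sup_{0\le s\le t}|Y^{\xi(0)}(s)|$, so that $\int_0^T\|Y_t^\xi\|^2_\8\,\d t\le T\|\xi\|_\8^2+T\sup_{0\le s\le T}|Y^{\xi(0)}(s)|^2$. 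Because $\|\xi\|_\8$ is a fixed constant, the essential task reduces to establishing the single-time exponential bound
\begin{equation*}
\E\,\e^{\ll T\sup_{0\le s\le T}|Y^{\xi(0)}(s)|^2}<\8
\end{equation*}
for $\ll$ in the stated range.

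The key analytic step is to apply It\^o's formula to $|Y^{\xi(0)}(t)|^2$ and exploit the dissipativity-type bound \eqref{eq13} from ({\bf A1}). Writing $Y=Y^{\xi(0)}$, It\^o's formula gives
\begin{equation*}
\d|Y(t)|^2=\big(2\<Y(t),b(Y(t))\>+\|\si\|_{\rm HS}^2\big)\d t+2\<Y(t),\si\,\d W(t)\>,
\end{equation*}
and substituting \eqref{eq13} yields $\d|Y(t)|^2\le(C+\|\si\|_{\rm HS}^2+\bb|Y(t)|^2)\d t+2\<Y(t),\si\,\d W(t)\>$. From here I would split $\sup_{0\le s\le T}|Y(s)|^2$ into a deterministic drift contribution, which after a Gronwall-type estimate contributes the factor $\e^{\bb T}$ (equivalently $\e^{\beta T}$ after tracking constants carefully over $[0,T]$), and a stochastic contribution governed by the martingale $M(t):=\int_0^t 2\<Y(s),\si\,\d W(s)\>$ whose quadratic variation is $\d\<M\>_t=4|\si^*Y(t)|^2\,\d t\le 4\|\si\|_{\rm HS}^2|Y(t)|^2\,\d t$.

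The main obstacle is controlling the supremum of the exponential of this stochastic integral, since the quadratic variation itself involves $|Y(s)|^2$, exactly the quantity we are trying to bound; a naive Doob/Burkholder estimate closes circularly. The standard device I would use is an exponential-martingale (Novikov/Kazamaki) argument combined with the factor $\ss{\e}$ hidden in the bound \eqref{eq11}: for suitable $\theta>0$ the process $\exp\big(\theta M(t)-\tfrac{\theta^2}{2}\<M\>_t\big)$ is a supermartingale, so that $\E\,\e^{\theta M(t)}\le\E\,\e^{\frac{\theta^2}{2}\<M\>_t}$, and I would iterate this against the Gronwall bound while choosing $\theta$ so that the resulting self-consistent inequality for $\E\,\e^{\ll T\sup|Y|^2}$ has a finite solution precisely when $2\ll T\|\si\|_{\rm HS}^2 T\,\e^{(1+\beta T)}<1$, i.e. when $\ll<\e^{-(1+\beta T)}/(2\|\si\|_{\rm HS}^2 T^2)$. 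The delicate bookkeeping lies in matching the constant $\e^{-(1+\beta T)}$ exactly: the factor $\e^{\beta T}$ arises from Gronwall on the deterministic drift and the factor $\e^{-1}$ (together with the factor $2$) arises from optimizing the exponential-martingale parameter $\theta$ against Doob's maximal inequality for the supremum. I expect that carefully balancing these two sources of constants, rather than any single inequality, is where the real work is, and it is what pins down the sharp threshold appearing in \eqref{eq11}.
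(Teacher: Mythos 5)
Your reduction (Jensen plus $\|Y_t^\xi\|_\8\le\|\xi\|_\8\vee\sup_{0\le s\le t}|Y^{\xi(0)}(s)|$, then It\^o on $|Y^{\xi(0)}(t)|^2$ with \eqref{eq13}) matches the opening of the paper's proof, and you correctly identify the real obstacle: the quadratic variation of the martingale part is again $\asymp\|\si\|_{\rm HS}^2|Y(t)|^2$, so any naive maximal estimate is circular. But the step you offer to break the circle --- ``iterate the supermartingale bound against Gronwall and solve a self-consistent inequality'' --- is not an argument; it is a description of the answer. If you actually run that scheme (Gronwall pathwise to get $\sup_{t\le T}|Y(t)|^2\le\e^{\bb T}(|x|^2+cT+\sup_{t\le T}M(t))$, then Doob plus the exponential-martingale/Cauchy--Schwarz bound $\E\,\e^{\theta M(T)}\le(\E\,\e^{2\theta^2\<M\>_T})^{1/2}$ with $\<M\>_T\le 4\|\si\|_{\rm HS}^2T\sup_{t\le T}|Y(t)|^2$, and close by Jensen after localization), the self-consistency condition comes out as $\ll\lesssim\e^{-2\bb T}/(8\|\si\|_{\rm HS}^2T^2)$, which is strictly worse than the stated threshold $\e^{-(1+\bb T)}/(2\|\si\|_{\rm HS}^2T^2)$ both in the prefactor and in the exponential dependence on $\bb T$. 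Your claim that careful bookkeeping ``pins down the sharp threshold'' is therefore not substantiated, and I do not see how this route yields the lemma as stated.

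The idea you are missing is the paper's choice of Lyapunov functional: apply It\^o not to $\e^{\vv|Y(t)|^2}$ but to the time-damped quantity $\e^{\vv\,\e^{-\gg t}|Y(t)|^2}$. For $\gg>\bb+2\|\si\|_{\rm HS}^2\vv$ the drift of this process is bounded by $c\,\e^{\vv\e^{-\gg t}|Y(t)|^2}$ \emph{minus} a nonnegative term proportional to $\vv(\gg-\bb-2\|\si\|_{\rm HS}^2\vv)\e^{-\gg t}\e^{\vv\e^{-\gg t}|Y(t)|^2}|Y(t)|^2$; Gronwall then gives the single-time bound \eqref{s4}, and integrating the discarded negative term gives the a priori bound \eqref{s2} on $\int_0^t\e^{-\gg s}\E(\e^{\vv\e^{-\gg s}|Y(s)|^2}|Y(s)|^2)\,\d s$. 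It is exactly this integrated bound that closes the BDG estimate for the supremum \emph{non-circularly}: after Young's inequality the BDG term is controlled by the quantity in \eqref{s2} rather than by the supremum itself. The threshold then falls out of the elementary optimization $\sup_{\vv>0}\vv\,\e^{-(\bb+2\|\si\|_{\rm HS}^2\vv)T}=\e^{-(1+\bb T)}/(2\|\si\|_{\rm HS}^2T)$ (taking $\gg\downarrow\bb+1/T$), which is where both the factor $\e^{-1}$ and the factor $\e^{-\bb T}$ genuinely come from --- not from balancing Doob's inequality against a Gronwall constant as you suggest. Without this (or an equivalent) device, your proposal does not reach the stated range of $\ll$.
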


\begin{proof}
Applying   Jensen's inequality and using the fact that
$\|Y_t^\xi\|_\8\le \|\xi\|_\8\vee\sup_{0\le s\le t}|Y^\xi(s)|$, we
have
\begin{equation}\label{s5}
\E\,\e^{\ll\int_0^T\|Y_t^\xi\|^2_\8\d t}   \le \ff{1}{T}
\int_0^T\E\,\e^{\ll T \,\|Y_t^\xi\|_\8^2}\d t\le\ff{\e^{\ll
T\|\xi\|_\8^2}}{T}\int_0^T\E\Big(\sup_{0\le s\le t}\e^{\ll
T|Y^{\xi(0)}(s)|^2}\Big)\d t,~ T>0.
\end{equation}
Next,   by It\^o's formula,     it follows from ({\bf A1})  that
\begin{equation}\label{eq7}
\begin{split}
&\d(\e^{-\gg t} |Y^{\xi(0)}(t)|^2)\\
&=\e^{-\gg t}\{-\gg  |Y^{\xi(0)}(t)|^2 +2\<Y^{\xi(0)}(t),b(Y^{\xi(0)}(t))\>+\|\si\|_{\rm HS}^2\}\d t\\
&\quad+2\,\e^{-\gg t}\<\si^*Y^{\xi(0)}(t),\d W(t)\>\\
&\le\e^{-\gg t}\{c-(\gg -\bb)|Y^{\xi(0)}(t)|^2\}\d t+2\,\e^{-\gg
t}\<\si^*Y^{\xi(0)}(t),\d W(t)\>,~~~~\gg>0.
\end{split}
\end{equation}
Also, via It\^o's formula,
  we deduce from \eqref{eq7} that
\begin{equation}\label{eq8}
\begin{split}
\d\e^{\vv\,\e^{- \gg t}|Y^{\xi(0)}(t)|^2}
&\le-\vv(\gg-\bb-2\|\si\|_{\rm HS}^2\vv)  \,\e^{-\gg
t}\e^{\vv\,\e^{-\gg t}|Y^{\xi(0)}(t)|^2}|Y^{\xi(0)}(t)|^2\d
t\\
&\quad+c\, \e^{\vv\,\e^{-\gg t}|Y^{\xi(0)}(t)|^2}\d t\\
&\quad+2\,\vv\,\e^{- \gg t}\,\e^{\vv\,\e^{-\gg
t}|Y^{\xi(0)}(t)|^2}\<\si^*Y^{\xi(0)}(t), \d
W(t)\>,~~~~~\gg>0,~~~\vv>0,
\end{split}
\end{equation}
which implies that, for any $\gg>\beta+2\|\si\|_{\rm HS}^2\vv,$ by
Gronwall's inequality,
\begin{equation}\label{s4}
\E\,\e^{\vv\,\e^{- \gg
t}|Y^{\xi(0)}(t)|^2}\le\e^{c\,t}\e^{\vv\,(1+|\xi(0)|^2)}.
\end{equation}
so that
\begin{equation}\label{s2}
\vv(\gg-\bb-2\|\si\|_{\rm HS}^2\vv)\int_0^t\e^{-\gg
s}\E(\e^{\vv\,\e^{-\gg s}|Y^{\xi(0)}(s)|^2} |Y^{\xi(0)}(s)|^2) \d
s\le (1+\e^{c\,t})\e^{\vv\,(1+|\xi(0)|^2)}.
\end{equation}
 Making use of  BDG's inequality and   Jensen's inequality, we derive
from   \eqref{eq8} and \eqref{s4}
 that
\begin{equation}\label{eq9}
\begin{split}
&\E\Big(\sup_{0\le s\le t}\e^{\vv\,\e^{-\gg
s}|Y^{\xi(0)}(s)|^2}\Big)\\&\le
\e^{\vv\,(1+|\xi(0)|^2)} +c\int_0^t \E\,\e^{\vv\,\e^{-\gg s}|Y^{\xi(0)}(s)|^2}\d s\\
&\quad+2\,\vv\E\big(\sup_{0\le s\le t}\int_0^s\e^{-\gg
u}\e^{\vv\,\e^{-\gg\,
u}(1+|Y^{\xi(0)}(u)|^2)}\<\si^*Y^{\xi(0)}(u),\d
W(u)\>\Big)\\
&\le(1+\e^{c\,t})\e^{\vv\,(1+|\xi(0)|^2)}\\
&\quad+8\ss2\,\vv\E\Big(\int_0^t\e^{-2\gg
s}\e^{2\vv\,\e^{-\gg\, s}(1+|Y^{\xi(0)}(s)|^2)}|\si^*Y^{\xi(0)}(s)|^2\d s\Big)^{1/2}\\
&\le(1+\e^{c\,t})\e^{\vv\,(1+|\xi(0)|^2)}
+\ff{1}{2}\,\E\Big(\sup_{0\le s\le
t}\e^{\vv\,\e^{-\gg s}(1+|Y^{\xi(0)}(s)|^2)}\Big)\\
&\quad+64\|\si\|_{\rm HS}^2\vv^2\int_0^t\e^{-\gg
s}\E(\e^{\vv\,\e^{-\gg\,
s}(1+|Y^{\xi(0)}(s)|^2)}|Y^{\xi(0)}(s)|^2)\d s,~~
\gg>\beta+2\,\|\si\|_{\rm HS}^2\,\vv.
\end{split}
\end{equation}
So plugging \eqref{s2} back into \eqref{eq9} yields that
\begin{equation}\label{eq10}
\E\Big(\sup_{0\le t\le T}\e^{\vv\,\e^{-\gg
T}|Y^{\xi(0)}(t)|^2}\Big)<\8,~~~\gg>\beta+2\,\|\si\|_{\rm HS}^2\vv.
\end{equation}
Note that
\begin{equation*}
\sup_{\vv>0}(\vv\e^{-(\bb+2\|\si\|_{\rm HS}^2\,\vv)
T})=\ll_T:=\ff{1}{2\|\si\|_{\rm HS}^2T}\e^{-(\bb T+1)}.
\end{equation*}
Consequently, in \eqref{eq10}, by taking
$\gg\downarrow\bb+\ff{1}{T}$, we arrive at
\begin{equation}\label{s6}
\E\Big(\sup_{0\le t\le T}\e^{\ll_0
|Y^{\xi(0)}(t)|^2}\Big)<\8,~~~~\ll_0\in(0,\ll_T)
\end{equation}
In the end, \eqref{eq11} follows from \eqref{s5} and \eqref{s6} in
 case of $\ll T<\ll_T$.
\end{proof}

\begin{rem}
In terms of Lemma \ref{novikov}, \eqref{eq11} holds for small  $T>0$
 provided that \eqref{eq3} is non-dissipative, i.e., $\bb\ge0$ in
\eqref{eq13}. Also, \eqref{eq11} is satisfied with  large $T>0$ in
case that \eqref{eq3} is dissipative, i.e., $\beta<0$ in
\eqref{eq13}.
\end{rem}

For notation brevity, we set
\begin{equation}\label{w5}
 h^\xi_1(t):=\si^{-1}\{b(Y^\xi(t))-b(Y^\xi(t_\dd))-Z(\hat Y_{t_\dd}^\xi
)\},~~~t\ge0,~~\xi\in\C,
\end{equation}
where $\hat Y^\xi_\cdot$ is defined exactly as in \eqref{w6} with
$X^{(\dd)}$ replaced by $Y^\xi$.

The lemma below plays an important  role in checking the Novikov
condition so that the Girsanov theorem is applicable and
investigating weak error analysis.

\begin{lem}\label{le1}
{\rm  Suppose that ({\bf A1}) and ({\bf A2}) hold. Then,
\begin{equation}\label{eq12}
\E\,\e^{\ll\int_0^T|\si^{-1}Z(Y_t^\xi)|^2\d t}<\8
\end{equation}
whenever $\ll,T>0$ such that
\begin{equation*}
\ll<\ff{\e^{-(1+\beta T)}}{2\,\|\si\|_{\rm HS}^2 \|\si^{-1}\|_{\rm
HS}^2\{L_2^2{\bf1}_{\{\aa=1\}}+0{\bf1}_{\{\aa\in(0,1)\}}\}T^2},
\end{equation*}
where we set $\ff{1}{0}=\8$. Moreover,
\begin{equation}\label{w12}
\E\,\e^{\ll\int_0^{T}|h^\xi_1(t)|^2\d t}<\8
\end{equation}
provided that $\ll,T>0$ such that
\begin{equation*}
\ll<\ff{\e^{-(1+\beta T)}}{4\,\|\si\|_{\rm HS}^2 \|\si^{-1}\|^2_{\rm
HS}\{(4L_1^2+ L_2^2){\bf 1}_{\{\aa=1\}}+L_1^2{\bf
1}_{\{\aa\in(0,1)\}}\}T^2}.
\end{equation*}
}
\end{lem}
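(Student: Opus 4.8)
The plan is to reduce both exponential bounds \eqref{eq12} and \eqref{w12} to the estimate already extracted inside the proof of Lemma \ref{novikov} — namely \eqref{s6}, which gives $\E\big(\sup_{0\le t\le T}\e^{\ll_0|Y^{\xi(0)}(t)|^2}\big)<\8$ for every $\ll_0$ below the threshold $\ll_T=\e^{-(\bb T+1)}/(2\,\|\si\|_{\rm HS}^2\,T)$. First I would bound each integrand pointwise by a quadratic functional of the running maximum of the reference process. For \eqref{eq12}, since $\xi_0\equiv{\bf 0}$, assumption $({\bf A2})$ yields $|\si^{-1}Z(Y_t^\xi)|\le\|\si^{-1}\|_{\rm HS}\big(L_2\|Y_t^\xi\|_\8^\aa+|Z(\xi_0)|\big)$, and the elementary inequality $\|Y_t^\xi\|_\8\le\|\xi\|_\8\vee\sup_{0\le s\le t}|Y^\xi(s)|$ already used in Lemma \ref{novikov} lets me trade the segment norm for $\sup_{0\le s\le t}|Y^{\xi(0)}(s)|$.

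For \eqref{w12} I would begin from the definition \eqref{w5} together with $({\bf A1})$ and $({\bf A2})$ to obtain $|h^\xi_1(t)|\le\|\si^{-1}\|_{\rm HS}\big\{L_1|Y^\xi(t)-Y^\xi(t_\dd)|+L_2\|\hat Y_{t_\dd}^\xi\|_\8^\aa+|Z(\xi_0)|\big\}$. Two geometric observations are needed here. Since $t-t_\dd<\dd<\tau$, both $Y^\xi(t)$ and $Y^\xi(t_\dd)$ lie in the current window, so $|Y^\xi(t)-Y^\xi(t_\dd)|\le 2\big(\|\xi\|_\8\vee\sup_{0\le s\le t}|Y^\xi(s)|\big)$; and because the interpolation $\hat Y_{t_\dd}^\xi$ in \eqref{w6} is a convex combination of the grid values $Y^\xi(t_\dd-j\dd)$, $0\le j\le M$, all lying in $[-\tau,t]$, one has $\|\hat Y_{t_\dd}^\xi\|_\8\le\|\xi\|_\8\vee\sup_{0\le s\le t}|Y^\xi(s)|$.

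The two cases then diverge according to the value of $\aa$. When $\aa=1$ both the Lipschitz drift increment and the $Z$-term are genuinely quadratic, so I would apply $(a+b)^2\le 2a^2+2b^2$ to separate them additively — the drift difference contributing the coefficient $4L_1^2$ (from $|Y^\xi(t)-Y^\xi(t_\dd)|^2\le 4\sup_{0\le s\le t}|Y^\xi(s)|^2$) and the $Z$-term contributing $L_2^2$ — while Young's inequality pushes the lower-order cross terms into an additive constant $C_\vv$. When $\aa\in(0,1)$ the $Z$-contribution is sub-quadratic, $\|\cdot\|_\8^{2\aa}\le\vv\|\cdot\|_\8^2+C_\vv$ for every $\vv>0$, so its quadratic part can be made arbitrarily small; this is exactly why the coefficient of $L_2^2$ degenerates to $0$ (with $1/0=\8$) in \eqref{eq12} and why only the drift term survives in \eqref{w12}. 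In all cases the outcome is a bound of the form $(\text{integrand})\le K\big(\sup_{0\le s\le t}|Y^\xi(s)|^2\big)+C_\vv$ with $K$ converging, as $\vv\downarrow 0$, to the coefficient read off from the statement.

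The last step converts the time integral in the exponent to a pointwise quantity exactly as in Lemma \ref{novikov}: by Jensen's inequality $\E\,\e^{\ll\int_0^T g(t)\d t}\le T^{-1}\int_0^T\E\,\e^{\ll Tg(t)}\d t$, after which the bound on $g$ and the identity $\sup_{0\le s\le t}|Y^\xi(s)|=\sup_{0\le s\le t}|Y^{\xi(0)}(s)|$ reduce the right-hand side to \eqref{s6} with effective parameter $\ll TK$; finiteness holds once $\ll TK<\ll_T$, i.e. $\ll<\ll_T/(TK)$, and optimizing $\vv\downarrow 0$ reproduces the displayed thresholds. I expect the main obstacle to be the bookkeeping of constants rather than any conceptual difficulty: one must carry the factors $\|\si\|_{\rm HS}^2$, $\|\si^{-1}\|_{\rm HS}^2$, the factor $2$ from the quadratic split and the factor $T^2$ so that $\ll_T/(TK)$ matches each denominator. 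In particular, the precise coefficient in the Hölder case hinges entirely on how sharply the drift increment $b(Y^\xi(t))-b(Y^\xi(t_\dd))$ is estimated, since a crude bound of $|Y^\xi(t)-Y^\xi(t_\dd)|$ by twice the running maximum is what governs the surviving multiple of $L_1^2$; this is the step I would check most carefully against the stated constant.
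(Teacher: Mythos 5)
Your proposal follows essentially the same route as the paper's proof: bound $|\sigma^{-1}Z(Y_t^\xi)|^2$ and $|h_1^\xi(t)|^2$ by $\mu_\varepsilon+\nu_\varepsilon\big(\|Y_t^\xi\|_\infty^2\vee\|Y_{t-\tau}^\xi\|_\infty^2\big)$ using ({\bf A2}), the interpolation estimate $\|\hat Y_{t_\delta}^\xi\|_\infty\le\|Y_t^\xi\|_\infty\vee\|Y_{t-\tau}^\xi\|_\infty$ and Young's inequality, and then invoke the exponential moment \eqref{s6}/\eqref{eq11} of Lemma \ref{novikov} via Jensen, exactly as in the text. The one point you flagged is the only real discrepancy: the crude bound $|Y^\xi(t)-Y^\xi(t_\delta)|^2\le 4\sup_{0\le s\le t}|Y^\xi(s)|^2$ gives a surviving coefficient $4L_1^2$ in the case $\alpha\in(0,1)$ where the paper's $\nu_\varepsilon$ in \eqref{eq17} records only $L_1^2$ inside the overall factor $2$, so your argument reproduces the stated threshold for $\alpha=1$ but one smaller by a factor $2$ for $\alpha\in(0,1)$ --- a bookkeeping issue already latent in the paper's own unproved display, not a conceptual gap.
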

\begin{proof}
From ({\bf A2}), it is obvious to see that
\begin{equation}\label{w7}
|Z(\xi)|\le |Z(\xi_0)|+L_2\|\xi\|_\8^\aa,~~~~~\xi\in\C,
\end{equation}
which, in addition to  Young's inequality, implies that
\begin{equation}\label{eq5}
|\si^{-1}Z(Y_t^\xi)|^2\le c_\vv+\|\si^{-1}\|_{\rm
HS}^2\{(1+\vv)L_2^2{\bf1}_{\{\aa=1\}}+\vv{\bf1}_{\{\aa\in(0,1)\}}\}\|\xi\|_\8^2,~~~\vv>0
\end{equation}
for some constant $c_\vv>0.$
As a consequence, \eqref{eq12}  holds true from \eqref{eq5}  and by
taking advantage of \eqref{eq11} followed by   choosing
$\vv\in(0,1)$ sufficiently small.

By the definition of $\hat Y_\cdot^\xi$ (see \eqref{w6} with
$X^{(\dd)}$ replaced by $Y^\xi$ for more details), a straightforward
calculation shows that
\begin{equation}\label{w8}
\begin{split}
\|\hat Y^\xi_{t_\dd}\|_\8&=\sup_{-\tau\le
\theta\le0}|\hat Y^\xi_{t_\dd}(\theta)|\\
&\le\max_{k\in\S}\sup_{-(k+1)\dd\le
\theta\le-k\dd}\Big(\ff{\theta+(1+k)\dd}{\dd}|Y^\vv(t_\dd-k\dd)|-\ff{\theta+k\dd}{\dd}|Y^\vv(t_\dd-(k+1)\dd)|\Big)\\
&\le \|Y^\xi_t\|_\8\vee\|Y^\xi_{t-\tau}\|_\8,~~~t\ge0
\end{split}
\end{equation}
due to the fact that $ (\theta+(1+k)\dd)/\dd-(\theta+k\dd)/\dd=1. $
Subsequently, \eqref{w8}, together with ({\bf A1}) as well as
\eqref{w7}, yields that
\begin{equation}\label{eq17}
|h^\xi_1(t)|^2
\le\mu_\vv+\nu_\vv
(\|Y^\xi_t\|_\8^2\vee\|Y^\xi_{t-\tau}\|_\8^2),~~\vv>0,~t\ge0
\end{equation}
for some $\mu_\vv>0$ and
\begin{equation*}
\nu_\vv:=2\|\si^{-1}\|^2_{\rm HS}\{(4L_1^2+
(1+\vv)L_2^2){\bf1}_{\{\aa=1\}}+L_1^2(1+\vv){\bf1}_{\{\aa\in(0,1)\}}\}
\end{equation*}
Thereby, \eqref{w12}   follows from \eqref{s6} and \eqref{eq17}  and
by noting that
\begin{equation*}
\int_0^T\e^{\ll(\|Y^\xi_t\|_\8^2\vee\|Y^\xi_{t-\tau}\|_\8^2)}\d
t\le\tau\|\xi\|_\8^2+2\int_0^T\e^{\ll\|Y^\xi_t\|_\8^2}\d t,~~~\ll>0,
\end{equation*}
where we set $\xi(\theta):=\xi(-\tau)$ for any
$\theta\in[-2\tau,-\tau]$.
\end{proof}

Next we intend to show that the  displacement of segment process is
continuous in the sense of $L^p$-norm sense.

\begin{lem}\label{con}
{\rm Under ({\bf A1}) and ({\bf A3}), for any $p>2$ and $T>0,$ there
exists a constant $C_{p,T}>0$ such that
\begin{equation}\label{s3}
\sup_{0\le t\le T}\E\|Y_t^\xi-\hat Y^\xi_{t_\dd}\|_\8^p\le
C_{p,T}\,\dd^{(p-2)/2}.
\end{equation}
}
\end{lem}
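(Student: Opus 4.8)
The plan is to reduce the segment-norm error to a supremum of small-time increments of the single auxiliary process $Y^\xi$, and then to trade that supremum for a \emph{sum} over the $M=\tau/\dd$ mesh blocks; each block contributes at the Brownian scale $\dd^{p/2}$, and the resulting factor $M\sim\dd^{-1}$ is exactly what degrades $\dd^{p/2}$ into the claimed rate $\dd^{(p-2)/2}$. I would begin by recording the moment bounds. Since \eqref{s6} gives $\E\sup_{0\le s\le T}\e^{\ll_0|Y^{\xi(0)}(s)|^2}<\8$, every polynomial moment of $\sup_{0\le s\le T}|Y^\xi(s)|$ is finite; combined with the linear growth $|b(x)|\le|b({\bf0})|+L_1|x|$ coming from ({\bf A1}), this yields $\sup_{-\tau\le s\le T}\E|Y^\xi(s)|^p<\8$ and $\sup_{0\le s\le T}\E|b(Y^\xi(s))|^p<\8$ for each fixed $p>2$.

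The next step is a geometric reduction. Reading \eqref{w6} with $X^{(\dd)}$ replaced by $Y^\xi$, one checks that $\hat Y^\xi_{t_\dd}(\theta)$ is precisely the value at real time $t_\dd+\theta$ of the piecewise-linear interpolant $\bar Y^\xi$ of $Y^\xi$ through the grid points $\{j\dd\}$; in particular, on each block $\theta\in[-(i+1)\dd,-i\dd]$, $i\in\S$, it lies on the segment joining $Y^\xi(t_\dd-i\dd)$ and $Y^\xi(t_\dd-(i+1)\dd)$. Writing $\rho:=t-t_\dd\in[0,\dd)$ and inserting $Y^\xi(t_\dd-i\dd)$, I would bound, on block $i$,
\[
|Y^\xi(t+\theta)-\hat Y^\xi_{t_\dd}(\theta)|\le\sup_{|u-(t_\dd-i\dd)|\le\dd}|Y^\xi(u)-Y^\xi(t_\dd-i\dd)|+|Y^\xi(t_\dd-i\dd)-Y^\xi(t_\dd-(i+1)\dd)|,
\]
using that the interpolant differs from an endpoint by at most the full endpoint increment, and that $t+\theta$ lies within distance $\dd$ of $t_\dd-i\dd$ (since $(t+\theta)-(t_\dd-i\dd)=\rho+(\theta+i\dd)\in[-\dd,\dd)$). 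Both terms on the right are increments of $Y^\xi$ over time windows of length at most $\dd$, and one checks that these windows stay inside $[-\tau,T]$ for $i\in\S$.

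The crux is then the single-block estimate. For a window $[a,b]\subseteq[0,T]$ with $b-a\le2\dd$ I would write $Y^\xi(u)-Y^\xi(a)=\int_a^u b(Y^\xi(r))\,\d r+\si(W(u)-W(a))$, apply H\"older's inequality to the drift integral and the Burkholder--Davis--Gundy inequality to the martingale part, and invoke the moment bounds above to get $\E\sup_{u\in[a,b]}|Y^\xi(u)-Y^\xi(a)|^p\le c\,\dd^{p/2}$ (the $\dd^{p}$ drift contribution being absorbed as $\dd<1$). For windows inside the initial segment $[-\tau,0)$ the same increment is at most $2L_3\dd$ by the Lipschitz property ({\bf A3}), hence $\le c\,\dd^{p}\le c\,\dd^{p/2}$; a window straddling $0$ is split at $0$ and handled by the two cases. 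Finally, since $\|Y_t^\xi-\hat Y^\xi_{t_\dd}\|_\8^p=\max_{i\in\S}(\cdots)^p\le\sum_{i\in\S}(\cdots)^p$ and there are $M=\tau/\dd$ blocks, summing the per-block bound $c\,\dd^{p/2}$ gives $\E\|Y_t^\xi-\hat Y^\xi_{t_\dd}\|_\8^p\le c\,\tau\,\dd^{(p-2)/2}$ uniformly in $t\in[0,T]$, which is \eqref{s3}.

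The main obstacle I anticipate is the bookkeeping in the geometric reduction: correctly identifying $\hat Y^\xi_{t_\dd}$ with the interpolant evaluated at the shifted time $t_\dd+\theta$ and absorbing the mismatch $\rho=t-t_\dd$, so that each block is genuinely reduced to an $O(\dd)$ oscillation of $Y^\xi$. The analytic estimates are otherwise routine once one accepts the deliberately lossy passage from the global supremum to the block sum. It is precisely this union-type bound over $M\sim\dd^{-1}$ blocks that creates the factor $\dd^{-1}$ and hence the stated rate $\dd^{(p-2)/2}$, rather than the sharper $\dd^{p/2}$ one would obtain from a genuine modulus-of-continuity argument.
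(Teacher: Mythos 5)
Your proposal is correct and follows essentially the same route as the paper: a uniform moment bound on $Y^\xi$, a per-block increment estimate of order $\dd^{p/2}$ via H\"older's and the Burkholder--Davis--Gundy inequalities, the union-type bound over the $M=\tau/\dd$ interpolation blocks that produces the factor $\dd^{-1}$ and hence the rate $\dd^{(p-2)/2}$, and ({\bf A3}) for the blocks falling into the initial segment. The only cosmetic difference is that you derive the polynomial moments from \eqref{s6} rather than citing the standard moment estimate \eqref{s1}.
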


\begin{proof}
By \cite[Theorem 4.4, p61]{M08}, for any $p>0$ and $T>0$,  there
exists $\hat C_{p,T}>0$ such that
\begin{equation}\label{s1}
\E\Big(\sup_{-\tau\le t\le T}|Y^\xi(t)|^p\Big)\le \hat
C_{p,T}(1+\|\xi\|_\8^p).
\end{equation}
By utilizing H\"older's inequality and BDG's inequality, it follows
from ({\bf A1}) and \eqref{s1} that
\begin{equation}\label{y3}
\begin{split}
&\E\Big(\sup_{k\dd \le t \le  k+2 \dd
}|Y^\xi(t)-Y^\xi(k\dd)|^p\Big)\\&\le c\,\Big\{
\dd^{p-1}\int_{k\dd}^{ (k+2)\dd}\E|b(Y^\xi(t))|^p\d t +
\E\Big(\sup_{0 \le t \le
2\dd}|W(t)|^p\Big) \Big\}\\
&\le c\,\Big\{ \dd^{p-1}\int_{k\dd}^{ (k+2)\dd}(1+\E|Y^\xi(t)|^p)\d
t
+ \dd^{p/2}  \Big\}\\
&\le c \,\dd^{p/2},~~~~p>2,~k\in\mathbb{N}.
\end{split}
\end{equation}
 Trivially,  there exists an integer $k_0$ such that
$t\in[k_0\dd, (k_0+1)\dd]$. So, for any $p>2,$
\begin{equation*}
\begin{split}
\E\|Y_t^\xi-\hat Y^\xi_{t_\dd}\|_\8^p &\le
M\max_{k\in\S}\E\Big(\sup_{-(k+1)\dd\le
v\le-k\dd}|Y^\xi(t+\theta)-\hat Y^\xi_{k_0\dd}(\theta)|^p\Big)\\
&\le c\,M\max_{k\in\S}\E
|Y^\xi((k_0-k)\dd)-Y^\xi((k_0-k-1)\dd)|^p\\
&\quad+c\,M\max_{k\in\S}\E\Big(\sup_{(k_0-k-1)\dd\le
s\le(k_0-k+1)\dd}|Y^\xi(s)-Y^\xi((k_0-k-1)\dd)|^p\Big).
\end{split}
\end{equation*}
In case of $k\le k_0-1,$  we find  from \eqref{y3} that \eqref{s3}
holds. On the other hand, if $k=k_0$, from ({\bf A3}), \eqref{y3}
and $M\dd=\tau$,
 then one gets that \eqref{s3} holds. Moreover, for $k\ge 1+k_0,$
\eqref{s3} is still true due to ({\bf A3}). The proof is therefore
complete.
\end{proof}

With the previous Lemmas in hand, we are now in the position to
complete the
\begin{proof}[{\bf Proof of Theorem \ref{th1}}]
Let
\begin{equation}\label{w23}
  W_2^\xi(t)=W(t)+\int_0^th^\xi_1(s)\d
  s,~~~~~t\ge0,
\end{equation}
where $h^\xi_1$ was introduced in \eqref{w5}. Define
\begin{equation*}
R^\xi_2(t)=\exp\Big(-\int_0^t\<h^\xi_1(s),\d
W(s)\>-\ff{1}{2}\int_0^t|h^\xi_1(s)|^2\d s\Big), ~~~~t\ge0
\end{equation*}
and $\d \mathbb{Q}_2^\xi=R_2^\xi(T)\d \P,$ where $T>0$ such that
\eqref{eq19}. Due to  \eqref{eq19} and \eqref{w12},  the Girsanov
theorem implies that $(W_2^\xi(t))_{t\in[0,T]}$ is a Brownian motion
under the probability measure $\mathbb{Q}_2^\xi.$ Thus, \eqref{eq3}
can be rewritten in the following form
\begin{equation}\label{q1}
\d Y^\xi(t) =\{b(Y^\xi(t_\dd))+Z(\hat Y_{t_\dd}^\xi )\}\d t+\si\,\d
W_2^\xi(t),~~~~t>0
\end{equation}
with the initial value $Y^\xi(\theta)=\xi(\theta),
\theta\in[-\tau,0]$ so that $(Y^\xi(t),W_2^\xi(t))_{t\in[0,T]}$ is a
weak solution to \eqref{eq2} under $\mathbb{Q}_2^\xi$. Obviously,
\eqref{eq2} has a unique strong solution so as to the weak solution
is unique. Since, by \eqref{eq19} and \eqref{eq12},
$(Y^\xi(t),W_1^\xi(t))_{t\in[0,T]}$ is a weak solution to
\eqref{eq1} under $\mathbb{Q}_1^\xi$ and
$(Y^\xi(t),W_2^\xi(t))_{t\in[0,T]}$ is a weak solution to
\eqref{eq2} under $\mathbb{Q}_2^\xi$, we deduce from the weak
uniqueness due to Lemma \ref{weak} and H\"older's inequality that
\begin{equation}\label{w9}
\begin{split}
&|\E f(X(t))-\E
f(X^{(\dd)}(t))|\\&=|\E_{\mathbb{Q}_1^\xi}f(Y^\xi(t))-\E_{\mathbb{Q}_2^\xi}f(Y^\xi(t))|\\
&=|\E((R_1^\xi(T)-R_2^\xi(T))f(Y^\xi(t)))|\\
&\le \interleave  f
\interleave_\8\E|R_1^\xi(T)-R_2^\xi(T)|\\
&\le \interleave  f \interleave_\8\E\bigg(
(R_1^\xi(T)+R_2^\xi(T))\Big(\Big|\int_0^t\<\si^{-1}Z(Y_s^\xi)+h^\xi_1(s),\d
W(s)\>\Big|\\
&\quad+
\frac{1}{2}\int_0^t|\,|h^\xi_1(s)|^2-|\si^{-1}Z(Y_s^\xi)|^2|\d
s\Big)\bigg)\\
&\le\interleave  f \interleave_\8
\Big((\E(R_1^\xi(T))^q)^{1/q}+(\E(R_2^\xi(T))^q)^{1/q}\Big)\\
&\quad\times\bigg\{\bigg(\E\Big(\Big|\int_0^t\<\si^{-1}Z(Y_s^\xi)+h^\xi_1(s),\d
W(s)\>\Big|^p\Big)\bigg)^{1/p}\\
&\quad+
\frac{1}{2}\int_0^t(\E|\,|h^\xi_1(s)|^2-|\si^{-1}Z(Y_s^\xi)|^2|^p)^{1/p}\d
s\bigg\}\\
&=:\|f\|_\8\Gamma(T)\{\Theta_1(t)+\Theta_2(t)\},~~~~~t\in[0,T]
\end{split}
\end{equation}
for $1/p+1/q=1, p,q>1,$ where in the second inequality we utilized
the fundamental inequality:
\begin{equation*}
|\e^x-\e^y|\le(\e^x+\e^{y})|x-y|,~~~x,y\in\R,
\end{equation*}
and, in the last two procedure,  employed the Minkowski inequality.
Let
\begin{equation*}
M_1(t)=\int_0^t\<\si^{-1}Z(Y_s^\xi),\d W(s)\> ~~\mbox{ and
}~~M_2(t)=-\int_0^t\<h^\xi_1(s),\d W(s)\>,~~~t\ge0.
\end{equation*}
For any $q>1,$ using H\"older's inequality and the fact that
$\e^{2qM_i(t)-2q^2\<M_i\>(t)}, i=1,2,$  is an exponential martingale
leads to
\begin{equation*}
\begin{split}
&\E(R_1^\xi(T))^q+\E(R_2^\xi(T))^q\\&=\E\,\e^{qM_1(T)-\ff{q}{2}\<M_1\>(T)}+\E\,\e^{qM_2(T)-\ff{q}{2}\<M_2\>(T)}\\
&\le(\E\,\e^{(2q^2-q)\<M_1\>(T)})^{1/2}+(\E\,\e^{(2q^2-q)\<M_2\>(T)})^{1/2}\\
&\le2\bigg(\E\exp\Big((2q^2-q)\int_0^T|\si^{-1}Z(Y_t^\xi)|^2\d
t\Big)\bigg)^{1/2}+\bigg(\E\exp\Big((2q^2-q)\int_0^T|h^\xi_1(t)|^2\d
t\Big) \bigg)^{1/2}.
\end{split}
\end{equation*}
Whence,  by taking $q\downarrow1$ and exploiting \eqref{eq19},
\eqref{eq12},   and \eqref{w12},
 one has, for some $\tilde
C_{q,T}>0$,
\begin{equation}\label{w13}
\Gamma(T)\le \tilde C_{q,T}.
\end{equation}
In view of  ({\bf A1}) and ({\bf A2}), in addition to
$|Y^\xi(t)-Y^\xi(t_\dd)|\le \|Y_t^\xi-\hat Y_{t_\dd}^\xi \|_\8$, it
holds that
\begin{equation}\label{w0}
\begin{split}
|\si^{-1}Z(Y_t^\xi)+h^\xi_1(t)|&\le c\,
\{|b(Y^\xi(t))-b(Y^\xi(t_\dd))|+|Z(Y_t^\xi)-Z(\hat Y_{t_\dd}^\xi )|\}\\
&\le c\,\{L_1|Y^\xi(t)-Y^\xi(t_\dd)|+L_2\|Y_t^\xi-\hat Y_{t_\dd}^\xi
\|_\8^\aa\}\\
&\le c\,\{\|Y_t^\xi-\hat Y_{t_\dd}^\xi \|_\8+\|Y_t^\xi-\hat
Y_{t_\dd}^\xi \|_\8^\aa\}.
\end{split}
\end{equation}
This, besides BDG's inequality followed by H\"older's inequality,
yields that
\begin{equation}\label{w14}
\begin{split}
\Theta_1(t)&\le c\,\bigg(
\int_0^t\E|\si^{-1}Z(Y_s^\xi)+h^\xi_1(s)|^p\d
 s   \bigg)^{1/p}\\
 &\le c\,\bigg(  \int_0^t\{\E\|Y_s^\xi-\hat Y_{s_\dd}^\xi \|_\8^p+ \E\|Y_t^\xi-\hat Y_{s_\dd}^\xi
\|_\8^{p\aa}\}\d
 s   \bigg)^{1/p}\\
 &\le c\,  \dd^{\ff{\aa}{2}-\ff{1}{p}},~~~~p>2/\aa,
\end{split}
\end{equation}
where we utilized \eqref{s3} in the last display. On the other hand,
applying H\"older's inequality and combining ({\bf A1}) with ({\bf
A2}) and \eqref{w0} enables us to obtain that
\begin{equation}\label{w15}
\begin{split}
\Theta_2(t)&\le\ff{1}{2}\int_0^t\{(\E|h^\xi_1(s)-\si^{-1}Z(Y_s^\xi)|^{p/(p-1)})^{p-1}\E|\si^{-1}Z(Y_s^\xi)+h^\xi_1(s)|^p\}^{1/p}\d s\\
&\le c\int_0^t\{(1+\E\|Y_s^\xi\|_\8^p+\E\|\hat
Y_{s_\dd}^\xi\|_\8^p)(\E\|Y_s^\xi-\hat Y_{s_\dd}^\xi
\|_\8^p+\E\|Y_s^\xi-\hat
Y_{s_\dd}^\xi \|_\8^{p\aa})\}^{1/p}\d s\\
&\le c\int_0^t(\E\|Y_s^\xi-\hat Y_{s_\dd}^\xi
\|_\8^p+\E\|Y_s^\xi-\hat
Y_{s_\dd}^\xi \|_\8^{p\aa})\}^{1/p}\d s\\
&\le c \, \dd^{\ff{\aa}{2}-\ff{1}{p}}, ~~~~p>\ff{2}{\aa},
\end{split}
\end{equation}
where we used \eqref{w8}
 and \eqref{s1} in the  penultimate  procedure and  exploited \eqref{s3} in the last step.
Consequently, substituting  \eqref{w13}, \eqref{w14} and \eqref{w15}
into \eqref{w9} and taking $p>2/\aa$ sufficiently large (so that
$q\downarrow1$) yields the assertions \eqref{w10}.
\end{proof}

\section{Extensions to Other Scenarios}\label{sec3}
In this section, we intend to extend the approach  to derive Theorem
\ref{th1} and investigate the weak convergence of   other kind of
numerical schemes for path-dependent SDEs with irregular
coefficients.

\subsection{Extension to Truncated EM Scheme}
In this subsection we are still interested in \eqref{eq1}. Rather
than the EM scheme \eqref{eq2}, we introduce the following truncated
EM scheme associated with \eqref{eq1}
\begin{equation}\label{w1}
\d X^{(\dd)}(t)=\{b(X^{(\dd)}(t_\dd))+Z(\hat X_t^{(\dd)})\}\d
t+\si\,\d W(t),~~~t>0
\end{equation}
with the initial value $X^{(\dd)}(\theta)=X(\theta)=\xi(\theta),
\theta\in[-\tau,0],$ where $\hat X_t^{(\dd)}\in\C$ is defined in the
 way  $$\hat X_t^{(\dd)}(\theta):=X^{(\dd)}((t+\theta)\wedge t_\dd),
\theta\in[-\tau,0].$$

As for the truncated EM scheme \eqref{w1}, the main result in this
subsection is stated as below.
\begin{thm}\label{th2}
Let $({\bf A1})$ and $({\bf A2})$   hold. Then, for any  $T>0$ such
that
\begin{equation*}
2\,\|\si\|_{\rm HS}^2 \|\si^{-1}\|^2_{\rm HS}\{(4L_1^2+ L_2^2){\bf
1}_{\{\aa=1\}}+L_1^2{\bf1}_{\{\aa\in(0,1)\}}\}< \e^{-(1+\beta
T)}/T^2,
\end{equation*}
 there exists a constant $C_{2,T}>0$ such that
\begin{equation}
|\E f(X(t))-\E f(X^{(\dd)}(t))|\le
C_{2,T}\,\dd^{\aa/2},~~~~f\in\B_b(\R^d),~~t\in[0,T].
\end{equation}
\end{thm}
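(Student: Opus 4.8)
The plan is to run exactly the Girsanov-based perturbation argument that proved Theorem \ref{th1}, changing only the segment that is being frozen. I keep the reference SDE \eqref{eq3} and its extension \eqref{a6}, and I keep the measure $\mathbb{Q}_1^\xi$ built from $\si^{-1}Z(Y_t^\xi)$, so that $(Y^\xi,W_1^\xi)$ is a weak solution of \eqref{eq1} and $\E f(X(t))=\E_{\mathbb{Q}_1^\xi}f(Y^\xi(t))$. The one genuinely new object is the perturbation drift adapted to the truncated scheme \eqref{w1}, namely
\begin{equation*}
h_2^\xi(t):=\si^{-1}\{b(Y^\xi(t))-b(Y^\xi(t_\dd))-Z(\hat Y_t^\xi)\},\qquad \hat Y_t^\xi(\theta):=Y^\xi((t+\theta)\wedge t_\dd),
\end{equation*}
($\hat Y_t^\xi$ being \eqref{w1}'s segment with $X^{(\dd)}$ replaced by $Y^\xi$). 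Setting $W_2^\xi(t)=W(t)+\int_0^t h_2^\xi(s)\,\d s$ and $R_2^\xi(T)=\exp(-\int_0^T\<h_2^\xi(s),\d W(s)\>-\ff12\int_0^T|h_2^\xi(s)|^2\d s)$, I define $\d\mathbb{Q}_2^\xi=R_2^\xi(T)\d\P$; under $\mathbb{Q}_2^\xi$ the process $Y^\xi$ solves precisely \eqref{w1}, so by the weak uniqueness of Lemma \ref{weak} we get $\E f(X^{(\dd)}(t))=\E_{\mathbb{Q}_2^\xi}f(Y^\xi(t))$ and the error decomposition \eqref{w9} transfers verbatim.

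Two ingredients must be re-established for the truncated segment. First, the exponential integrability \eqref{w12} of $h_2^\xi$. Here the crude bound $\|\hat Y_t^\xi\|_\8=\sup_{-\tau\le\theta\le0}|Y^\xi((t+\theta)\wedge t_\dd)|\le\|Y_t^\xi\|_\8$ replaces \eqref{w8} (and is in fact cleaner, carrying no $\|Y_{t-\tau}^\xi\|_\8$ term), so the pointwise estimate \eqref{eq17} holds with the same constant $\nu_\vv$; Lemma \ref{novikov} then yields \eqref{w12} under exactly the threshold assumed in the statement, which in turn makes the Novikov condition valid for $\mathbb{Q}_2^\xi$ and keeps the Radon--Nikodym control $\Gamma(T)\le\tilde C_{q,T}$ of \eqref{w13} intact. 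The existence of $\mathbb{Q}_1^\xi$ is unaffected since it only needs \eqref{eq12}, whose threshold is weaker.

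Second --- and this is the decisive step --- the displacement estimate. Since $(t+\theta)\wedge t_\dd=t+\theta$ whenever $t+\theta\le t_\dd$, the two segments $Y_t^\xi$ and $\hat Y_t^\xi$ coincide on the whole past $\{\theta:t+\theta\le t_\dd\}$ and differ only for $\theta$ with $t+\theta\in(t_\dd,t]$, an interval of length $t-t_\dd<\dd$, on which the difference equals the increment $Y^\xi(t+\theta)-Y^\xi(t_\dd)$. Hence
\begin{equation*}
\|Y_t^\xi-\hat Y_t^\xi\|_\8=\sup_{t_\dd\le s\le t}|Y^\xi(s)-Y^\xi(t_\dd)|,
\end{equation*}
and the one-step oscillation bound \eqref{y3}, which uses only $({\bf A1})$ and \eqref{s1}, gives $\sup_{0\le t\le T}\E\|Y_t^\xi-\hat Y_t^\xi\|_\8^p\le C_{p,T}\,\dd^{p/2}$ for all $p>2$. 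This is where the two features advertised in the statement emerge. Because the truncation reproduces the initial window exactly, the displacement never touches the initial datum, so $({\bf A3})$ becomes superfluous; and because only the single short interval $(t_\dd,t]$ contributes, there is no factor $M=\tau/\dd$ as in the proof of Lemma \ref{con}, upgrading the exponent from $\dd^{(p-2)/2}$ of \eqref{s3} to the sharp $\dd^{p/2}$.

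Feeding $\dd^{p/2}$ into the analogues of \eqref{w14} and \eqref{w15} then produces $\Theta_1(t),\Theta_2(t)\le c\,\dd^{\aa/2}$ directly: the $L^p$-displacement raised to the $\aa$-power and then taken to the $1/p$ contributes $(\dd^{p\aa/2})^{1/p}=\dd^{\aa/2}$ with no residual $-1/p$ loss, so forcing $p\to\8$ is unnecessary and the exact order $\aa/2$ survives. Substituting \eqref{w13} and these two bounds into \eqref{w9} closes the proof. I expect the main obstacle to be precisely the sharp displacement estimate: one must check carefully that the truncated segment agrees with $Y_t^\xi$ on the entire history up to $t_\dd$ and deviates only over $(t_\dd,t]$, so that the error collapses to a single-step oscillation of size $\dd^{1/2}$ in $L^p$; everything else is a faithful transcription of the proof of Theorem \ref{th1}.
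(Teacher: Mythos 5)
Your proposal is correct and follows essentially the same route as the paper: the same perturbation drift $h_2^\xi$, the same observation that $\|\hat Y_t^\xi\|_\8\le\|Y_t^\xi\|_\8$ lets Lemma \ref{le1} carry over, and the same reduction of the displacement to the single-step oscillation over $(t_\dd,t]$ yielding the sharp $\dd^{p/2}$ bound of \eqref{a1}. Your added remarks on why ({\bf A3}) drops out and why the exact order $\aa/2$ survives without sending $p\to\8$ are accurate and consistent with the paper's (more terse) argument.
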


\begin{proof}
Herein we just list some dissimilarities   since the argument of
Theorem \ref{th2} is parallel to that of Theorem \ref{th1}. Set
\begin{equation*}
 h^\xi_2(t):=\si^{-1}\{b(Y^\xi(t))-b(Y^\xi(t_\dd))-Z(\hat Y_t^\xi
)\},~~~t\ge0,~~\xi\in\C
\end{equation*}
with $$\hat Y_t^\xi(\theta)=Y^\xi((t+\theta)\wedge t_\dd),~
\theta\in[-\tau,0].$$ It is easy to see that
\begin{equation*}
\|\hat Y_t^\xi\|_\8=\sup_{t-\tau\le s\le t}|Y^\xi(s\wedge
t_\dd)|\le\|Y_t^\xi\|_\8.
\end{equation*}
So Lemma \ref{le1} still holds with $h_1^\xi$ replaced by $h_2^\xi$
by virtue of Lemma \ref{novikov}. On the other hand, by ({\bf A1})
and \eqref{s1}, we infer from H\"older's inequality and BDG's
inequality that
\begin{equation}\label{a1}
\begin{split}
\E\|Y_t^\xi-\hat Y_t^\xi\|_\8^p&=\E\Big(\sup_{t-\tau\le s\le
t}|Y^\xi(s)-
Y^\xi(s\wedge t_\dd)|^p\Big)\\
&=\E\Big(\sup_{t-\tau\le s\le t}|Y^\xi(s)- Y^\xi(  t_\dd)|^p{\bf
1}_{\{s\ge t_\dd\}}\Big)\\
&=\E\Big(\sup_{t-\tau\le s\le t}\Big|\int_{t_\dd}^sb(Y^\xi(u)\d
u+\int_{t_\dd}^s \si\d W(s)\Big|^p{\bf 1}_{\{s\ge t_\dd\}}\Big)\\
&\le c\,\Big\{\dd^{p-1}\int_{t_\dd}^t|b(Y^\xi(u)|^p\d
u+\E\Big(\sup_{t_\dd\le s\le t}\Big|\int_{t_\dd}^s \si\d
W(s)\Big|^p\Big)\Big\}\\
&\le c\,\dd^{p/2},~~~p\ge1.
\end{split}
\end{equation}
Having Lemma \ref{le1} with writing $h_2^\xi$ in lieu of $h_1^\xi$
and \eqref{a1} in hand, the proof of Theorem \ref{th2} is therefore
complete by inspecting the argument of Theorem \ref{th1}.
\end{proof}

\begin{rem}\label{rem}
{\rm In terms of Theorems \ref{th1} and \ref{th2}, we conclude that
the truncated EM scheme \eqref{w1} enjoys a better weak convergence
rate than the EM scheme \eqref{eq2}. On the other hand, with regard
to the truncated EM scheme, we drop the assumption ({\bf A3})
  in Theorem \ref{th2}. Furthermore, we point out that the EM scheme
  \eqref{eq2} established via interpolation works merely for
  path-dependent SDEs with finite memory. While the truncated EM
  scheme \eqref{w1} is available for path-dependent SDEs with infinite memory
  as the following subsection demonstrates.
}
\end{rem}

\subsection{Extension to path-dependent SDEs with infinite memory}
As we depicted  in Remark \ref{rem}, one of the advantages of the
truncated EM scheme \eqref{w1} is that it is applicable to
path-dependent SDEs with infinite memory. To proceed, let's
introduce some additional notation. For  a fixed number
 $r\in(0,\8),$ let
\begin{equation*}
\mathscr{C}_r=\Big\{\phi\in
C((-\8,0];\R^d):\|\phi\|_r:=\sup_{-\8<\theta\le0}(\e^{r\theta}|\phi(\theta)|)<\8\Big\},
\end{equation*}
which is a Polish space under the metric induced by $\|\cdot\|_r$.

In this subsection, we focus on the following path-dependent SDE
with infinite memory
\begin{equation}\label{a2}
\d X(t)=\{b(X(t))+Z(X_t)\}\d t+\si\d W(t),~~~t>0,~~~X_0=\xi\in\C_r,
\end{equation}
in which
\begin{enumerate}
\item[$(\bf A2')$]
$Z:\C_r\to\R^d$ is H\"older continuous, i.e., there exist
$\aa\in(0,1]$ and   $L_4>0 $  such that
\begin{equation*}
|Z(\xi)-Z(\eta)|\le L_4\|\xi-\eta\|_r^\aa,~~~~\xi,\eta\in\C_r,
\end{equation*}
\end{enumerate}
and the other quantities are stipulated exactly as in \eqref{eq1}.
Similar to \eqref{w1}, we define the truncated EM scheme associated
with \eqref{a2}  by
\begin{equation}
\d X^{(\dd)}(t)=\{b(X^{(\dd)}(t_\dd))+Z(\hat X_t^{(\dd)})\}\d
t+\si\,\d W(t),~~~t>0
\end{equation}
with the initial datum  $X^{(\dd)}(\theta)=X(\theta)=\xi(\theta),
\theta\in(-\8,0],$ in which $\hat X_t^{(\dd)}\in\C_r$ is designed by
   $$\hat
X_t^{(\dd)}(\theta):=X^{(\dd)}((t+\theta)\wedge t_\dd),
\theta\in(-\8,0].$$

The main result in this subsection is presented as follows.
\begin{thm}\label{th3}
{\rm Assume   assumptions of Theorem \ref{th2} hold with ({\bf A2})
replaced by $(\bf A2')$. Then,   there exists a constant $C_{3,T}>0$
such that
\begin{equation}\label{a4}
|\E f(X(t))-\E f(X^{(\dd)}(t))|\le
C_{3,T}\,\dd^{\aa/2},~~~~f\in\B_b(\R^d),~~t\in[0,T]
\end{equation}
provided that the stepsize $\dd\in(0,1)$ is sufficiently small. }
\end{thm}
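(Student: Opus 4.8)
The plan is to replicate the Girsanov-based perturbation scheme already established for Theorem \ref{th2}, since the structural argument carries over verbatim once the infinite-memory analogues of the supporting lemmas are in place. As in the proof of Theorem \ref{th1}, I would introduce the reference SDE \eqref{eq3} on $\R^d$, extend its solution to $(-\8,\8)$ via a formula analogous to \eqref{a6} (using the $\C_r$-valued initial datum $\xi$ on $(-\8,0]$), and form the two densities $R_1^\xi$ and $R_2^\xi$ realizing \eqref{a2} and its truncated EM scheme as weak solutions on a common path $Y^\xi$. The weak error then splits as in \eqref{w9} into a product $\|f\|_\8\,\GG(T)\{\Theta_1(t)+\Theta_2(t)\}$, and the target rate $\dd^{\aa/2}$ emerges once one controls (i) the exponential integrability needed for the Novikov condition and the bound $\GG(T)\le\tilde C_{q,T}$, and (ii) the $L^p$-displacement estimate feeding $\Theta_1,\Theta_2$.

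The two items to verify are exactly the infinite-memory analogues of Lemma \ref{le1} and estimate \eqref{a1}. For the exponential integrability, the key observation is that the truncation $\hat Y_t^\xi(\theta)=Y^\xi((t+\theta)\wedge t_\dd)$ again satisfies the pointwise domination $\|\hat Y_t^\xi\|_r\le\|Y_t^\xi\|_r$, because the supremum defining $\|\cdot\|_r$ over $(-\8,0]$ can only decrease under replacing the argument by $(t+\theta)\wedge t_\dd$; combined with the H\"older bound $(\bf A2')$ written as $|Z(\xi)|\le|Z(\xi_0)|+L_4\|\xi\|_r^\aa$ and Young's inequality, this reduces everything to controlling $\E\,\e^{\ll\int_0^T\|Y_t^\xi\|_r^2\d t}$. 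Here I would prove a $\|\cdot\|_r$-version of Lemma \ref{novikov}: since $\|Y_t^\xi\|_r\le\|\xi\|_r\vee\sup_{0\le s\le t}|Y^\xi(s)|$ (the weight $\e^{r\theta}\le1$ discounts the past), the same It\^o/BDG/Gronwall computation yielding \eqref{s6} applies unchanged, giving the desired finite exponential moment for $T$ small enough (the hypothesis inherited from Theorem \ref{th2} guaranteeing the relevant constant is below $\e^{-(1+\bb T)}/T^2$). This reuse of \eqref{s6} is the crux that makes the infinite-memory case no harder than the finite one.

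For the displacement estimate I would reprove \eqref{a1} in the $\|\cdot\|_r$-norm. Writing $\E\|Y_t^\xi-\hat Y_t^\xi\|_r^p=\E\big(\sup_{\theta\le0}\e^{pr\theta}|Y^\xi(t+\theta)-Y^\xi((t+\theta)\wedge t_\dd)|^p\big)$ and noting the difference vanishes unless $t+\theta\ge t_\dd$, the supremum is effectively taken over $\theta\in[t_\dd-t,0]$, a window of length at most $\dd$ on which $\e^{pr\theta}\le1$; the increment $Y^\xi(t+\theta)-Y^\xi(t_\dd)$ is then handled by H\"older's and BDG's inequalities together with the moment bound \eqref{s1} (whose $\C_r$-analogue I would cite from the infinite-memory existence theory, e.g. the cited references on path-dependent SDEs), producing $\E\|Y_t^\xi-\hat Y_t^\xi\|_r^p\le c\,\dd^{p/2}$ exactly as in \eqref{a1}. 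With these two ingredients, $\Theta_1$ and $\Theta_2$ are bounded by $c\,\dd^{\aa/2-1/p}$ as in \eqref{w14}--\eqref{w15}, and letting $p\to\8$ (equivalently $q\da1$) delivers \eqref{a4}.

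The main obstacle I anticipate is purely technical rather than conceptual: establishing the $\C_r$-moment estimate replacing \eqref{s1}, since the exponential weight $\e^{r\theta}$ must be shown to interact harmlessly with the unbounded delay, and one must confirm that the solution of \eqref{a2} genuinely lives in $\C_r$ and that $\sup_{-\8<t\le T}\e^{rt}|Y^\xi(t)|$ has finite $p$-th moments uniformly in $\dd$. Once the weight is verified to be dominated by $1$ on the past (so that it never amplifies increments) and to provide the requisite integrability of the initial segment, the remaining steps are mechanical transcriptions of the proofs of Theorems \ref{th1} and \ref{th2}.
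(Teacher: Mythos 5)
Your overall architecture is the paper's: Girsanov perturbation off the reference SDE, the Novikov condition via a $\|\cdot\|_r$-version of Lemma \ref{novikov}, and a $\|\cdot\|_r$-displacement estimate of order $\dd^{p/2}$ feeding $\Theta_1,\Theta_2$. But there is one concrete error at what you yourself call the crux. The claimed domination $\|\hat Y_t^\xi\|_r\le\|Y_t^\xi\|_r$ is false. The weight $\e^{r\theta}$ is attached to the lag $\theta$, not to the time at which $Y^\xi$ is evaluated: for $\theta\in(t_\dd-t,0]$ the truncated segment takes the value $Y^\xi(t_\dd)$ paired with the weight $\e^{r\theta}$, which at $\theta=0$ equals $1$, whereas in $\|Y_t^\xi\|_r$ the same value $Y^\xi(t_\dd)$ carries the smaller weight $\e^{r(t_\dd-t)}\ge\e^{-r\dd}$. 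So if $|Y^\xi(t_\dd)|$ dominates the segment, the truncated norm strictly exceeds $\|Y_t^\xi\|_r$; replacing the argument by $(t+\theta)\wedge t_\dd$ does not ``only decrease'' the supremum. The correct estimate, which is what the paper proves, is $\|\hat Y^\xi_t\|_r\le\e^{r\dd}\|Y_t^\xi\|_r$.

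This is not cosmetic: the factor $\e^{r\dd}$ enters (squared) into the constant governing the Novikov-type bound \eqref{w12} for $h_3^\xi$, so the required exponential moment is only finite after shrinking $\dd$ until the inflated constant falls back below the threshold $\e^{-(1+\beta T)}/T^2$. That is precisely the origin of the hypothesis ``provided that the stepsize $\dd\in(0,1)$ is sufficiently small'' in the statement, which your argument cannot account for --- under your claimed domination the conclusion would hold for every $\dd\in(0,1)$, exactly as in Theorem \ref{th2}, and the distinction the paper draws between finite and infinite memory would disappear. The remainder of your outline (the $\|\cdot\|_r$-analogue of \eqref{s6} via $\|Y_t^\xi\|_r\le\|\xi\|_r\vee\sup_{0\le s\le t}|Y^\xi(s)|$, and the displacement bound localized to the window $[t_\dd,t]$ of length at most $\dd$ where $\e^{pr\theta}\le1$) matches the paper and goes through once the factor $\e^{r\dd}$ is carried along.
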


\begin{proof}
Since
$$\|Y_t^\xi\|_r\le \|\xi\|_r+\sup_{0\le s\le
t}|Y^\xi(s)|, $$ Lemma \ref{novikov} still holds with $\|\cdot\|_\8$
replaced by   $\|\cdot\|_r.$ Also, \eqref{eq12} holds under the
assumptions ({\bf A1}) and $(\bf A2')$ so that \eqref{a2} has a
unique weak solution by following the argument of Lemma \ref{weak}.
Let
\begin{equation*}
 h^\xi_3(t)=\si^{-1}\{b(Y^\xi(t))-b(Y^\xi(t_\dd))-Z(\hat Y_t^\xi
)\},~~~t\ge0,~~\xi\in\C_r,
\end{equation*}
where $$\hat Y_t^\xi(\theta):=Y^\xi((t+\theta)\wedge t_\dd),~
\theta\in(-\8,0].$$ Clearly, we have
\begin{equation*}
\begin{split}
\|\hat Y^\xi_t\|_r
&=\e^{-rt}\sup_{-\8< s\le t}(\e^{rs}|Y^\xi(s)|{\bf 1}_{\{s\le
t_\dd\}})+\e^{-rt}\sup_{-\8< s\le t}(\e^{rs}|Y^\xi(t_\dd)|{\bf
1}_{\{  t_\dd\le s\}})\\
&\le\e^{-rt}\sup_{-\8< s\le t}(\e^{rs}|Y^\xi(s)|{\bf 1}_{\{s\le
t_\dd\}})+\e^{r\dd}\e^{-rt}\sup_{-\8< s\le
t}(\e^{rt_\dd}|Y^\xi(t_\dd)|{\bf 1}_{\{ t_\dd\le s\}})\\
&\le\e^{r\dd}\|Y_t^\xi\|_r.
\end{split}
\end{equation*}
So \eqref{w12}  with writing $h_3^\xi(t)$ instead of $h_1^\xi(t)$
remains true whenever the stesize $\dd\in(0,1)$ is sufficiently
small. Moreover, by virtue of ({\bf A1}), \eqref{s1}, H\"older's
inequality as well as BDG's inequality, it follows that
\begin{equation*}
\begin{split}
\E\|Y_t^\xi-\hat Y^\xi_t\|_r^p &=\e^{-prt}\E\Big(\sup_{-\8< s\le
t}(\e^{prs}|Y^\xi(s)- Y^\xi(s\wedge
t_\dd)|^p)\Big)\\
&\le\E\Big(\sup_{t_\dd< s\le
t}\Big(\Big|\int^s_{t_\dd}b(Y^{\xi(0)}(s))\d
s+\si(W(s)-W(t_\dd)) \Big|^p\Big)\Big)\\
&\le c\,\dd^{p/2},~~~p\ge2.
\end{split}
\end{equation*}
Afterwards, carrying out a similar argument to derive Theorem
\ref{th1} we obtain the desired assertion \eqref{a4}.
\end{proof}

\begin{rem}
{\rm To the best of knowledge, Theorem \ref{th3} is the first result
upon  weak convergence  for path-dependent SDEs with   infinite
memory and irregular drifts. For path-dependent SDEs with finite
memory, Theorems \ref{th1} and \ref{th2} shows that the weak
convergence order can be achieved for any $\dd\in(0,1).$ However,
concerning path-dependent SDEs with infinite memory, the weak
convergence rate can only  be available whenever the stepsize
$\dd\in(0,1)$ is sufficiently small. This illustrates one of the
essential features  between SDEs with finite memory and SDEs
infinite memory. Moreover, Theorem \ref{th3} further shows the
superiority of the truncated EM scheme \eqref{w1} with contrast to
the EM scheme established by interpolations at discrete-time points.
}
\end{rem}

\section{Weak Convergence: Degenerate Case}\label{sec4}
In the previous sections, we investigate weak convergence of EM
schemes and its variants for non-degenerate path-dependent SDEs with
H\"older continuous drifts. In this section, we are still interested
in the same topic but concerned with a class of degenerate SDE on
$\R^{2d}:=\R^d\times\R^d$
\begin{equation}\label{d1}
\begin{cases}
\d X(t)=\{X(t)+Y(t)\}\d t\\
\d Y(t)=\{b(X(t),Y(t))+Z(X_t, Y_t)\}\d t+\si\d W(t),~~~~t\ge0
\end{cases}
\end{equation}
with the initial datum $(X_0,Y_0)=(\xi,\eta)\in\C^2$, where
$b:\R^{2d}\rightarrow\R^d$, $Z: \C^2\to\R^d$,
$\si\in\mathbb{M}_{\mathrm{non}}^{d}$, and $(W(t))_{t\ge0}$ is a
$d$-dimensional Brownian motion on the probability space
$(\OO,\F,(\F_t)_{t\ge0},\P)$. \eqref{d1} is the so-called stochastic
Hamiltonian systems, which has been  investigated considerably in
\cite{MSH,S94,W14,WZ,Zhang}, to name a few.

Throughout this section, we assume that
\begin{enumerate}
\item[(\bf H1)]$b$ is Lipschitz continuous, that is, there exists
$K_1>0$ such that
\begin{equation}\label{d2}
|b(x_1,y_1)-b(x_2,y_2)|\le
K_1(|x_1-y_1|+|y_1-y_2|),~~(x_1,y_1),(x_2,y_2)\in\R^{2d}
\end{equation}
 and there exist
$\aa,\bb,\ll,C>0 $ and $\gg\in(-\aa\bb,\aa\bb)$   such that
\begin{equation}\label{d3}
\<\aa x+\gg y,x+y\>+\<\bb y+\gg x,b(x,y)\>\le C-\ll
(|x|^2+|y|^2),~~~(x,y)\in\R^{2d}.
\end{equation}

\item[(\bf H2)] $Z$ is H\"older continuous, i.e., there exist
$\aa\in(0,1]$ and $K_2>0$ such that
$$|Z(\xi_1,\eta_1)-Z(\xi_2,\eta_2)|\le K_2(\|\xi_1-\xi_2\|_\8^\aa+\|\eta_1-\eta_2\|_\8^\aa),~~~(\xi_1,\eta_1),(\xi_2,\eta_2)\in\C^2.$$
\end{enumerate}

By carrying out a similar argument to derive Lemma \ref{weak} and
taking advantage of Lemma \ref{le2} below, \eqref{d1} has a unique
weak solution under   ({\bf H1}) and ({\bf H2}). With the assumption
\eqref{d2}, the following reference SDE
\begin{equation}\label{d4}
\begin{cases}
\d U(t)=\{U(t)+V(t)\}\d t\\
\d V(t)= b\,(U(t),V(t)) \d t+\si\d W(t),~~~~t\ge0
\end{cases}
\end{equation}
with the initial data $(U(0), V(0))=(u,v)\in\R^{2d}$ is wellposed.
To emphasize the initial value $(u,v)\in\R^{2d}$, we shall write
$(U^{u,v}(t), V^{u,v}(t))$ instead of $(U (t), V (t))$. Analogously
to \eqref{a6}, we can extend respectively $U(t)$ and $V(t)$ in the
following way:
\begin{equation*}
U^{\xi,\eta}(t)=\xi(t){\bf
1}_{[-\tau,0)}(t)+U^{\xi(0),\eta(0)}(t){\bf
1}_{[0,\8)}(t),~~~t\in[-\tau,\8),~~(\xi,\eta)\in\C^2
\end{equation*}
and
\begin{equation*}
V^{\xi,\eta}(t)=\eta(t){\bf
1}_{[-\tau,0)}(t)+V^{\xi(0),\eta(0)}(t){\bf
1}_{[0,\8)}(t),~~~t\in[-\tau,\8),~~(\xi,\eta)\in\C^2.
\end{equation*}
Let $U_t^{\xi,\eta}$ and $ V^{\xi,\eta}_t$ be the segment process
associated with $U^{\xi,\eta}(t)$ and $V^{\xi,\eta}(t)$,
respectively. Next, the truncated EM scheme corresponding to
\eqref{d1} is given by
\begin{equation*}
\begin{cases}
\d X^{(\dd)}(t)=\{X^{(\dd)}(t)+Y^{(\dd)}(t)\}\d t\\
\d Y^{(\dd)}(t)=\{b(X^{(\dd)}(t_\dd),Y^{(\dd)}(t_\dd))+Z(\hat
X_t^{(\dd)}, \hat Y_t^{(\dd)})\}\d t+\si\d W(t)
\end{cases}
\end{equation*}
with the initial value
$(X^{(\dd)}(\theta),Y^{(\dd)}(\theta))=(X(\theta),Y
(\theta))=(\xi(\theta),\eta(\theta))\in\R^{2d}, \theta\in[-\tau,0],$
where
\begin{equation*}
\hat X_t^{(\dd)}(\theta):= X^{(\dd)}((t+\theta)\wedge t_\dd)
~~\mbox{ and }~~\hat Y_t^{(\dd)}(\theta):=
Y^{(\dd)}((t+\theta)\wedge t_\dd),~~~\theta\in[-\tau,0].
\end{equation*}
Observe that
\begin{equation*}
\d X^{(\dd)}(t)=\{X^{(\dd)}(t)+ (b(X^{(\dd)}(0),Y^{(\dd)}(0))t+Z(\tt
X_t^{(\dd)}, \tt Y_t^{(\dd)}))+\si W(t)\}\d t,~~~t\in[0,\dd]
\end{equation*}
where, for any $\theta\in[-\tau,0],$
\begin{equation*}
\Lambda(t):=\int_0^tZ(\tt X_s^{(\dd)},\tt Y_s^{(\dd)})\d s~~\mbox{
with }\tt X_s^{(\dd)}(\theta)=X((t+\theta)\wedge0),~ ~\tt
Y_t^{(\dd)}(\theta):=Y((t+\theta)\wedge0).
\end{equation*}
Thus, $(X^{(\dd)}(t))_{t\in[0,\dd]}$ can be obtained explicitly via
the variation-of-constants formula. Inductively, $X^{(\dd)}(t)$
enjoys explicit formula.

In the sequel,   for $\aa,\bb,\gg$ such that \eqref{d3}, consider
the following Lyapunov function
\begin{equation*}
\mathbb{W}(x,y):=\ff{\aa}{2}|x|^2+\ff{\bb}{2}|y|^2+\gg\<x,y\>,~~~x,y\in\R^d.
\end{equation*}
For  $\gg\in(-\aa\bb,\aa\bb),$ it is easy to see that
\begin{equation}\label{d5}
\kk_2(|x|^2+ |y|^2)\le \mathbb{W}(x,y)\le\kk_1(|x|^2+ |y|^2),
~~~x,y\in\R^d,
\end{equation}
in which $ \kappa_1:= (1+\aa)(1+\bb)/2 $ and
\begin{equation}\label{d7}
\kk_2:=\ff{1}{2}\bigg\{
\bigg(\aa-\ff{1}{2}(\aa/|\gg|+|\gg|/\bb)\bigg)\wedge\Big(\bb-\ff{2|\gg|}{\aa/|\gg|+|\gg|/\bb}\Big)\bigg\}.
\end{equation}

The main result in this section is presented as follows.
\begin{thm}\label{th4}
{\rm Assume ({\bf H1}) and ({\bf H2}) hold. Then, for any $T>0$ such
that
\begin{equation*}
2\,\kk_3\,\|\si\|_{\rm HS}^2 \|\si^{-1}\|_{\rm
HS}^2\{(4K_1^2+K_2^2){\bf1}_{\{\aa=1\}}+2K_1^2{\bf1}_{\{\aa\in(0,1)\}}\}T^2<\kk_2\,\e^{\ll\kk_2
T-1}
\end{equation*}
 there exists $C_{4,T}>0$ such that
\begin{equation}\label{d12}
|\E f(X(t),Y(t))-\E f(X^{(\dd)}(t),Y^{(\dd)}(t))|\le
C_{4,T}\,\dd^{\aa/2},~~~~f\in\B_b(\R^{2d}),~~t\in[0,T].
\end{equation}
}
\end{thm}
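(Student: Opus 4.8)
The plan is to transplant the Girsanov perturbation scheme of Theorem \ref{th1} to the degenerate system \eqref{d1}, with the Lyapunov function $\mathbb{W}$ taking over the role that $|\cdot|^2$ played in the non-degenerate case. The crucial structural observation is that the Brownian motion enters only the $V$-equation of the reference system \eqref{d4}, while the noiseless first equation $\d U=(U+V)\d t$ is identical in \eqref{d1}, in \eqref{d4} and in the truncated EM scheme; hence $U$ is \emph{slaved} to $V$ through the common ODE, and the change of measure need only act on the $V$-component. Concretely, I would set
\[
h^{\xi,\eta}(t):=\si^{-1}\{b(U^{\xi,\eta}(t),V^{\xi,\eta}(t))-b(U^{\xi,\eta}(t_\dd),V^{\xi,\eta}(t_\dd))-Z(\hat U_t^{\xi,\eta},\hat V_t^{\xi,\eta})\},
\]
form the two exponential martingales $R_1,R_2$ generated by $\si^{-1}Z(U^{\xi,\eta}_t,V^{\xi,\eta}_t)$ and $h^{\xi,\eta}$ respectively, and, after checking the Novikov conditions, verify that the measure changes $\d\Q_1=R_1(T)\d\P$ and $\d\Q_2=R_2(T)\d\P$ turn $(U^{\xi,\eta},V^{\xi,\eta})$ into a weak solution of \eqref{d1} under $\Q_1$ and of the truncated EM scheme under $\Q_2$ (the $U$-equation being left untouched in both cases). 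Weak uniqueness then gives
\[
|\E f(X(t),Y(t))-\E f(X^{(\dd)}(t),Y^{(\dd)}(t))|=|\E((R_1(T)-R_2(T))f(U^{\xi,\eta}(t),V^{\xi,\eta}(t)))|,
\]
which is estimated by the identical Hölder/Minkowski chain as in \eqref{w9}, reducing everything to a bound $\Gamma(T)<\8$ on the Radon--Nikodym moments and the two displacement functionals.

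The hard part, and the first step I would carry out, is the degenerate analogue of Lemma \ref{novikov} (to be recorded as Lemma \ref{le2}), asserting $\E\exp(\ll\int_0^T(\|U^{\xi,\eta}_t\|_\8^2\vee\|V^{\xi,\eta}_t\|_\8^2)\d t)<\8$ for $\ll$ below the threshold in the theorem. Since the noise is absent from the $U$-equation, $|U|^2$ cannot be controlled directly; instead I would apply It\^o's formula to $\e^{\vv\,\e^{-\gg t}\mathbb{W}(U(t),V(t))}$, use \eqref{d3} to dominate the drift $\<\aa U+\gg V,U+V\>+\<\bb V+\gg U,b(U,V)\>$ by $C-\ll(|U|^2+|V|^2)$, and then invoke the two-sided comparison \eqref{d5} to convert $|U|^2+|V|^2\ge\mathbb{W}/\kk_1$ into genuine dissipativity for $\mathbb{W}$ itself. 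The It\^o correction contributes only $\tfrac{\bb}{2}\|\si\|_{\rm HS}^2$ (as $\Hess_y\mathbb{W}=\bb I$), and the martingale term $\<\bb V+\gg U,\si\,\d W\>$ has quadratic variation $\le\kk_3\,\|\si\|_{\rm HS}^2(|U|^2+|V|^2)$; the constant $\kk_3$ and the dissipativity gain $\e^{\ll\kk_2 T}$ are exactly what appear in the smallness condition of the theorem. Running the BDG/Gronwall bootstrap of Lemma \ref{novikov} verbatim yields $\E\sup_{0\le t\le T}\e^{\ll_0\mathbb{W}(U(t),V(t))}<\8$, and a final application of the lower bound in \eqref{d5} returns exponential integrability of $|U|^2+|V|^2$, hence of $\|U_t\|_\8^2\vee\|V_t\|_\8^2$.

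With Lemma \ref{le2} in hand the remaining steps are routine adaptations of Sections \ref{sec1}--\ref{sec3}. For the Novikov conditions and the finiteness $\Gamma(T)\le\tilde C$ of \eqref{w13}, I would combine the growth bounds from ({\bf H1}) and ({\bf H2}) with Lemma \ref{le2}, using $\|\hat U_t^{\xi,\eta}\|_\8\le\|U_t^{\xi,\eta}\|_\8$ and $\|\hat V_t^{\xi,\eta}\|_\8\le\|V_t^{\xi,\eta}\|_\8$ for the truncation, thereby reproducing Lemma \ref{le1} under the theorem's time condition. The displacement estimates I would treat componentwise: the $V$-component inherits the sharp $O(\dd^{p/2})$ bound of \eqref{a1} directly from the noise via BDG, whereas the $U$-component, governed by the noiseless ODE $\d U=(U+V)\d t$, is Lipschitz in time on moment-bounded events and therefore satisfies the even better $\E\|U_t^{\xi,\eta}-\hat U_t^{\xi,\eta}\|_\8^p\le c\,\dd^{p}$ via the standard moment bound (analogue of \eqref{s1}). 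Feeding these into the analogues of \eqref{w0}, \eqref{w14} and \eqref{w15}, the dominant contribution comes from $(\E\|V_t^{\xi,\eta}-\hat V_t^{\xi,\eta}\|_\8^{p\aa})^{1/p}\le c\,\dd^{\aa/2}$, the $U$-terms being of order $\dd\vee\dd^{\aa}$ and hence subdominant; since the truncated-scheme displacement bound carries no $1/p$ loss, choosing $p>2/\aa$ (so that $q\downarrow1$) gives the clean rate $\dd^{\aa/2}$ in \eqref{d12}, completing the proof.
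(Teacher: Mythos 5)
Your proposal follows essentially the same route as the paper: a Girsanov change of measure acting only on the $V$-equation, the exponential integrability of $\mathbb{W}(U,V)$ established via It\^o's formula and the dissipativity condition \eqref{d3} (this is exactly the paper's Lemma \ref{le2}), the Novikov verification of Lemma \ref{le3} using $\|\hat U_t^{\xi,\eta}\|_\8\le\|U_t^{\xi,\eta}\|_\8$, and the H\"older/Minkowski chain of \eqref{w9}. Your componentwise displacement estimate (noting the noiseless $U$-equation gives the better order $\dd^{p}$) is a slightly more explicit justification of the paper's bound \eqref{r9}, but the argument is the same.
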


\begin{rem}
{\rm The dissipative condition \eqref{d3} is imposed to guarantee
that the time horizontal $T>0$ in Theorem \ref{th4} is large in
certain situation. Nevertheless, in case of $\ll<0$,  \eqref{d12}
remains true but for small time horizontal. Moreover, we can also
investigate weak convergence of EM scheme via interpolation for
\eqref{d1} but with an additional assumption put on the initial
value. Also we point out that, whenever the numerical scheme of the
second component is established by interpolation, the algorithm for
the first component is much more explicit compared with the
truncated EM scheme.

}
\end{rem}

The proof of Theorem \ref{th4} is based on several lemmas below. The
following lemma shows exponential integrability of segment process.
\begin{lem}\label{le2}
{\rm Let \eqref{d3} hold. Then, for any $T>0,$
\begin{equation}\label{d11}
\E\exp\Big(\ll\int_0^T(\|U_t^{\xi,\eta}\|_\8^2+\|V_t^{\xi,\eta}\|_\8^2)\d
t\Big)<\8, ~~~\ll<\ff{\kk_2\,\e^{\ll\kk_2 T-1}}{\kk_3\,\|\si\|_{\rm
HS}^2 T^2}
\end{equation}
with $\kk_3:=\gg^2\vee\bb^2.$}
\end{lem}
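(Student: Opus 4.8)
The plan is to establish Lemma \ref{le2} as the degenerate analogue of Lemma \ref{novikov}, replacing the scalar energy $|Y^{\xi(0)}(t)|^2$ by the quadratic Lyapunov form $\W(U(t),V(t))$ attached to the Hamiltonian coupling in \eqref{d4}. The form $\W$ is built so that, by \eqref{d5}, it is comparable to $|U(t)|^2+|V(t)|^2$ through $\kk_1,\kk_2$, and so that, by the dissipativity \eqref{d3}, its drift decays along the reference dynamics. First I would reduce the integrated exponential to a supremum, exactly as in \eqref{s5}: combining Jensen's inequality with $\|U_t^{\xi,\eta}\|_\8^2+\|V_t^{\xi,\eta}\|_\8^2\le(\|\xi\|_\8^2+\|\eta\|_\8^2)\vee\sup_{0\le s\le t}(|U^{\xi(0),\eta(0)}(s)|^2+|V^{\xi(0),\eta(0)}(s)|^2)$ and the lower bound $\kk_2(|U|^2+|V|^2)\le\W(U,V)$ from \eqref{d5}, the claim is reduced to a bound of the form $\E\big(\sup_{0\le s\le t}\e^{c\,\W(U(s),V(s))}\big)<\8$ with a coefficient $c$ proportional to $\ll T$ and inversely proportional to $\kk_2$.

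The heart of the matter is an exponential It\^o estimate for $\W$. Applying It\^o's formula to $\W(U(t),V(t))$ along \eqref{d4} and using $\nn_x\W=\aa x+\gg y$, $\nn_y\W=\bb y+\gg x$ and $\nn^2_{yy}\W=\bb\,I$, the only stochastic integral is produced by the $V$-component, whence
\begin{equation*}
\d\W(U(t),V(t))\le\big(c-\ll(|U(t)|^2+|V(t)|^2)\big)\d t+\<\si^*(\bb V(t)+\gg U(t)),\d W(t)\>,
\end{equation*}
where \eqref{d3} absorbs the whole drift and $c$ collects the constant $C$ together with the trace contribution $\ff{\bb}{2}\|\si\|_{\rm HS}^2$. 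I would then mimic \eqref{eq7}--\eqref{eq8}: compute first $\d(\e^{-\zeta t}\W)$ for a free decay rate $\zeta>0$ and next $\d\,\e^{\vv\,\e^{-\zeta t}\W}$ for a free $\vv>0$. The crucial new feature is that the quadratic variation of the martingale is controlled by $|\si^*(\bb V+\gg U)|^2\le 2\kk_3\|\si\|_{\rm HS}^2(|U|^2+|V|^2)$, which is exactly where $\kk_3=\bb^2\vee\gg^2$ enters; combined with $\W\ge\kk_2(|U|^2+|V|^2)$, choosing $\zeta$ large enough that $\zeta\kk_2+\ll>\vv\kk_3\|\si\|_{\rm HS}^2$ renders the coefficient of the non-integrable term negative, so Gronwall's inequality delivers the analogues of \eqref{s2} and \eqref{s4}.

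From here the passage to the supremum is achieved by BDG's and Jensen's inequalities precisely as in \eqref{eq9}--\eqref{eq10}, yielding $\E\big(\sup_{0\le t\le T}\e^{\vv\,\e^{-\zeta T}\W(U(t),V(t))}\big)<\8$ under the constraint above. The sharp range of $\ll$ then follows by optimising the effective coefficient over the free parameters $\vv,\zeta$, in exact parallel with the optimisation $\sup_{\vv>0}(\vv\,\e^{-(\bb+2\|\si\|_{\rm HS}^2\vv)T})$ performed in Lemma \ref{novikov}; here the growth rate $\bb$ is superseded by the dissipative rate read off from \eqref{d3} through \eqref{d5}, and the noise intensity acquires the factor $\kk_3/\kk_2$ coming respectively from the gradient $\bb y+\gg x$ and the lower bound in \eqref{d5}. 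Optimising the resulting expression over $\vv$ (its maximum being attained at an interior point, as in Lemma \ref{novikov}) and feeding it back through \eqref{d5} reproduces the stated threshold $\ll<\kk_2\,\e^{\ll\kk_2T-1}/(\kk_3\|\si\|_{\rm HS}^2T^2)$.

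The main obstacle I anticipate is precisely this constant bookkeeping: one must keep straight how $\kk_2$ from the two-sided comparison \eqref{d5}--\eqref{d7} and $\kk_3$ from the gradient $\bb y+\gg x$ entering the noise combine through the two conversions between $\W$ and $|U|^2+|V|^2$, and one must handle the self-referential character of the threshold, in which the dissipativity constant $\ll$ itself appears inside the exponential $\e^{\ll\kk_2T}$ on the right-hand side. This implicit form is exactly what permits a large admissible time horizon $T$ precisely when the reference system \eqref{d4} is dissipative, in the same spirit as the remark following Lemma \ref{novikov}.
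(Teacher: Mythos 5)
Your proposal follows essentially the same route as the paper: reduce to a supremum bound via Jensen, apply the exponential It\^o argument of Lemma \ref{novikov} to the Lyapunov form $\W$ (whose drift is absorbed by \eqref{d3} and whose martingale part $\<\si^*(\gg U+\bb V),\d W\>$ produces the factor $\kk_3$), then use Gronwall, BDG and the two-sided comparison \eqref{d5} before optimising over the free parameters to recover the stated threshold. This matches the paper's proof step for step, up to the minor constant bookkeeping you yourself flag.
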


\begin{proof}
For notation simplicity, in what follows we write $U(t)$ and $V(t)$
in lieu of $U^{\xi,\eta}(t)$ and $V^{\xi,\eta}(t)$, respectively. By
a close inspection of the proof for Lemma \ref{novikov}, to verify
\eqref{eq11} it is sufficient to show that,  for any $\vv>0$ and
$\gg>-\ll\,\kk_2+\kk_3\,\|\si\|_{\rm HS}^2\,\vv$,
\begin{equation}\label{s7}
\E\Big(\sup_{0\le t\le T}\e^{\vv\,\kk_2\,\e^{-\gg\,
T}(|U(t)|^2+|V(t)|^2)}\Big)<\8,
\end{equation}
where $\kk_2$ was given in \eqref{d7} and $\kk_3:=\gg^2\vee\bb^2.$
By the It\^o formula, it follows from \eqref{d3} and \eqref{d5} that
\begin{equation*}
\begin{split}
&\d (\e^{-\gg t}\mathbb{W}(U(t),V(t)))\\&=\e^{-\gg t}\Big\{-\gg
\mathbb{W}(U(t),V(t))+\<\aa U(t)+\gg V(t),U(t)+V(t)\>\\
&\quad+\<\gg U(t)+\bb V(t),b(U(t),V(t))\> +(C+\|\si\|_{\rm
HS}^2/2)\Big\}\d t+\e^{-\gg t}\<\si^*(\gg U(t)+\bb
V(t)),\d W(t) \>\\
&\le\e^{-\gg t}\Big\{-(\gg+\ll\kk_2) \mathbb{W}(U(t),V(t))
+(C+\|\si\|_{\rm HS}^2/2)\Big\}\d t+\e^{-\gg t}\<\si^*(\gg U(t)+\bb
V(t)),\d W(t) \>.
\end{split}
\end{equation*}
This implies via It\^o's formula that
\begin{equation}\label{a7}
\begin{split}
&\d\e^{\vv\,\e^{-\gg
t}\mathbb{W}(U(t),V(t))}\\&\le-\vv\,(\gg+\ll\kk_2-\kk_3\|\si\|_{\rm
HS}^2\vv)\e^{-\gg
t}\e^{\vv\,\e^{-\gg t}\mathbb{W}(U(t),V(t))}\mathbb{W}(U(t),V(t))\d t\\
&\quad+c_\vv\,\e^{\vv\,\e^{-\gg t}\mathbb{W}(U(t),V(t))}\d
t+\vv\,\e^{-\gg t}\e^{\vv\,\e^{-\gg
t}\mathbb{W}(U(t),V(t))}\<\si^*(\gg U(t)+\bb V(t)),\d W(t)
\>,~~\vv>0
\end{split}
\end{equation}
for some constant $c_\vv>0$. For any
$\gg>-\ll\kk_2+\kk_3\|\si\|_{\rm HS}^2\vv$,   Gronwall's inequality,
in addition to \eqref{d5},  yields that
\begin{equation}\label{d9}
\E\,\e^{\vv\,\e^{-\gg t}W(U(t),V(t))}\le\e^{c_\vv
t}\e^{\vv\kk_1(|\xi(0)|^2+|\eta(0)|^2)},
\end{equation}
which, together with \eqref{a7}, leads further to
\begin{equation}\label{d8}
\begin{split}
&\vv\,(\gg+\ll\kk_2-\kk_3\|\si\|_{\rm HS}^2\vv)\int_0^t\e^{-\gg
s}\,\E\,\e^{\vv\,\e^{-\gg
s}\mathbb{W}(U(s),V(s))}\mathbb{W}(U(s),V(s))\d
s\\
&\le(1+\e^{c_\vv t})\e^{\vv\kk_1(|\xi(0)|^2+|\eta(0)|^2)}.
\end{split}
\end{equation}
Subsequently, by means of BDG's inequality, we derive from
\eqref{d5} and \eqref{d8} that
\begin{equation}\label{d10}
\begin{split}
&\E\Big(\sup_{0\le s\le t}\int_0^s\e^{-\gg u}\e^{\vv\,\e^{-\gg
u}\mathbb{W}(U(u),V(u))}\<\si^*(\gg U(u)+\bb V(u)),\d W(u) \>\Big)\\
&\le 4\ss2\,\E\Big(\int_0^t\e^{-2\gg s}\e^{2\vv\,\e^{-\gg
s}\mathbb{W}(U(s),V(s))}|\si^*(\gg U(s)+\bb V(s))|^2\d s\Big)^{1/2}\\
&\le \ff{1}{2}\,\E\Big(\sup_{0\le s\le t}\e^{ \vv\,\e^{-\gg
s}\mathbb{W}(U(s),V(s))}\Big)+c\,\int_0^t\e^{-\gg
s}\,\E\,\e^{\vv\,\e^{-\gg
s}\mathbb{W}(U(s),V(s))}(|U(s)|^2+|V(s)|^2)\d s\\
&\le\ff{1}{2}\,\E\Big(\sup_{0\le s\le t}\e^{ \vv\,\e^{-\gg
t}\mathbb{W}(U(t),V(t))}\Big)+c\,\int_0^t\e^{-\gg
s}\,\E\,\e^{\vv\,\e^{-\gg
s}\mathbb{W}(U(s),V(s))}\mathbb{W}(U(s),V(s))\d s\\
&\le\ff{1}{2}\,\E\Big(\sup_{0\le s\le t}\e^{ \vv\,\e^{-\gg
t}\mathbb{W}(U(t),V(t))}\Big)+c\,(1+\e^{c_\vv
t})\e^{\vv\kk_1(|\xi(0)|^2+|\eta(0)|^2)}.
\end{split}
\end{equation}
With \eqref{a7}-\eqref{d10} in hand, we thus arrive at
\begin{equation*}
\E\Big(\sup_{0\le t\le T}\e^{\vv\,\e^{-\gg\,
T}\mathbb{W}(U(t),V(t))}\Big)<\8.
\end{equation*}
This, combining with \eqref{d5}, yields \eqref{s7}.
\end{proof}

For notation brevity, we  set
\begin{equation*}
h^{\xi,\eta}(t):=\si^{-1}\{b\,(U^{\xi,\eta}(t),V^{\xi,\eta}(t))-b\,(U^{\xi,\eta}(t_\dd),V^{\xi,\eta}(t_\dd))-Z(\hat
U^{\xi,\eta}_t,\hat V^{\xi,\eta}_t\}.
\end{equation*}

\begin{lem}\label{le3}
{\rm Let ({\bf H1}) and ({\bf H2}) hold. Then,
\begin{equation}\label{r1}
\E\e^{\ll\int_0^T|\si^{-1}Z(U^{\xi,\eta}_t,V^{\xi,\eta}_t)|^2\d
t}<\8
\end{equation}
for any $\ll,T>0$ such that
\begin{equation*}
\ll< \ff{\kk_2\,\e^{\ll\kk_2 T-1}}{2\,\kk_3\,\|\si\|_{\rm HS}^2
\|\si^{-1}\|_{\rm
HS}^2\{K_2^2{\bf1}_{\{\aa=1\}}+0{\bf1}_{\{\aa\in(0,1)\}}\}T^2}
\end{equation*}
Furthermore,
\begin{equation}\label{r7}
\E\,\e^{\ll\int_0^T|h^{\xi,\eta}(t)|^2\d t}<\8,~~~~
\ll<\ff{\kk_2\,\e^{\ll\kk_2 T-1}}{4\,\kk_3\,\|\si\|_{\rm HS}^2
\|\si^{-1}\|_{\rm HS}^2(4K_1^2+K_2^2)T^2}
\end{equation}
for any $\ll,T>0$ such that
\begin{equation*}
\ll<\ff{\kk_2\,\e^{\ll\kk_2 T-1}}{4\,\kk_3\,\|\si\|_{\rm HS}^2
\|\si^{-1}\|_{\rm
HS}^2\{(4K_1^2+K_2^2){\bf1}_{\{\aa=1\}}+2K_1^2{\bf1}_{\{\aa\in(0,1)\}}\}T^2}.
\end{equation*}
}
\end{lem}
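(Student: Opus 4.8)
The plan is to transcribe the proof of Lemma \ref{le1} almost verbatim, the only structural change being that the scalar exponential-integrability input of Lemma \ref{novikov} is now supplied by its degenerate analogue Lemma \ref{le2}. Writing $(U(t),V(t)):=(U^{\xi,\eta}(t),V^{\xi,\eta}(t))$ and $\Psi_t:=\|U_t^{\xi,\eta}\|_\8^2+\|V_t^{\xi,\eta}\|_\8^2$, the engine of the argument is the bound $\E\e^{\ll\int_0^T\Psi_t\,\d t}<\8$ provided by \eqref{d11} for $\ll$ below the threshold recorded there. The whole lemma then reduces to two pathwise estimates dominating $|\si^{-1}Z(U_t,V_t)|^2$ and $|h^{\xi,\eta}(t)|^2$ by $\Psi_t$ up to an $\vv$-perturbation; both conclusions follow by bounding the relevant time-integral, factoring the deterministic part out of the exponent, invoking \eqref{d11}, and finally letting $\vv\downarrow0$ so that the coefficient of $\Psi_t$ shrinks to its limiting value.

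For \eqref{r1} I would first record, exactly as in \eqref{w7}, that ({\bf H2}) forces $|Z(\xi,\eta)|\le|Z(\xi_0,\xi_0)|+K_2(\|\xi\|_\8^\aa+\|\eta\|_\8^\aa)$. Evaluating along $(U_t,V_t)$ and using Young's inequality as in \eqref{eq5}, together with $(\|U_t\|_\8+\|V_t\|_\8)^2\le2\Psi_t$, produces
\[
|\si^{-1}Z(U_t,V_t)|^2\le c_\vv+2(1+\vv)\|\si^{-1}\|_{\rm HS}^2\big\{K_2^2{\bf1}_{\{\aa=1\}}+\vv{\bf1}_{\{\aa\in(0,1)\}}\big\}\,\Psi_t,\qquad\vv>0,
\]
the constant $c_\vv$ absorbing the cross terms and, when $\aa\in(0,1)$, the sublinear part $\|U_t\|_\8^{2\aa}+\|V_t\|_\8^{2\aa}$ via Young. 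Plugging this into the exponent and applying \eqref{d11} to the surviving $\Psi_t$-integral yields \eqref{r1} as soon as $\vv\downarrow0$: the limiting coefficient is $2\|\si^{-1}\|_{\rm HS}^2K_2^2$ when $\aa=1$ and $0$ when $\aa\in(0,1)$, which is precisely the factor appearing in the stated range (with the convention $1/0=\8$ covering the sublinear case).

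For \eqref{r7} I would begin, as in the proof of Theorem \ref{th2}, by observing that the truncated interpolation never leaves the current window, so $\|\hat U_t^{\xi,\eta}\|_\8\le\|U_t^{\xi,\eta}\|_\8$ and $\|\hat V_t^{\xi,\eta}\|_\8\le\|V_t^{\xi,\eta}\|_\8$. Splitting $|h^{\xi,\eta}(t)|^2\le2\|\si^{-1}\|_{\rm HS}^2\big(|b(U(t),V(t))-b(U(t_\dd),V(t_\dd))|^2+|Z(\hat U_t^{\xi,\eta},\hat V_t^{\xi,\eta})|^2\big)$, bounding the first term by ({\bf H1}) together with $|U(t)-U(t_\dd)|\le2\|U_t^{\xi,\eta}\|_\8$ (and likewise for $V$), and the second by the growth bound of the previous step, I obtain a pathwise estimate $|h^{\xi,\eta}(t)|^2\le\mu_\vv+\nu_\vv\Psi_t$ whose coefficient satisfies $\nu_\vv\to4\|\si^{-1}\|_{\rm HS}^2\{(4K_1^2+K_2^2){\bf1}_{\{\aa=1\}}+2K_1^2{\bf1}_{\{\aa\in(0,1)\}}\}$ as $\vv\downarrow0$ (the asymmetry between the two indicator cases coming from whether the $Z$-contribution is retained or absorbed by Young). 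Feeding this into $\E\e^{\ll\int_0^T|h^{\xi,\eta}|^2\d t}$ and invoking \eqref{d11} once more delivers \eqref{r7}.

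The one delicate point is the constant bookkeeping: because the threshold in Lemma \ref{le2} is self-referential (of the form $\ll<\kk_2\e^{\ll\kk_2T-1}/(\cdots T^2)$), one must carry the limiting coefficient of $\Psi_t$ through both the denominator and the exponent consistently and then check that the resulting admissible set of $(\ll,T)$ is nonempty under the dissipativity \eqref{d3}. Everything else is a mechanical transcription of the non-degenerate computation, with the combined quantity $\Psi_t$ playing the role of $\|Y_t^\xi\|_\8^2$; it is worth emphasizing that its exponential integrability holds despite the noise entering only the $V$-component, which is exactly the effect the cross term $\gg\<x,y\>$ in the Lyapunov function $\mathbb{W}$ was introduced to capture in Lemma \ref{le2}.
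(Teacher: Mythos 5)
Your proposal is correct and follows essentially the same route as the paper: the paper likewise establishes the pathwise bounds $|\si^{-1}Z(U^{\xi,\eta}_t,V^{\xi,\eta}_t)|^2\le c_\vv+\nu_\vv(\|U^{\xi,\eta}_t\|_\8^2+\|V^{\xi,\eta}_t\|_\8^2)$ and $|h^{\xi,\eta}(t)|^2\le c_\vv+\nu_\vv(\|U^{\xi,\eta}_t\|_\8^2+\|V^{\xi,\eta}_t\|_\8^2)$ with the same limiting coefficients, and then concludes by Lemma \ref{le2}. The constant bookkeeping you flag as delicate matches the paper's exactly.
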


\begin{proof}
From ({\bf A2}), it holds that there exists   some constant $c_\vv>0
$ such that, for any $\vv>0,$
\begin{equation}\label{r3}
|\si^{-1}Z(U^{\xi,\eta}_t,V^{\xi,\eta}_t)|^2\le
c_\vv+\{2K_2^2\|\si^{-1}\|_{\rm HS}^2(
1+\vv){\bf1}_{\{\aa=1\}}+\vv{\bf1}_{\{\aa\in(0,1)\}}\}(\|U^{\xi,\eta}_t\|_\8^2+\|V^{\xi,\eta}_t\|_\8^2).
\end{equation}
Henceforth, \eqref{r1}  follows  from \eqref{r3} and Lemma
\ref{le2}.

Next, with the aid of \eqref{d2} and ({\bf H2}) and due to the facts
that $\|\hat U^{\xi,\eta}_t\|_\8\le \| U^{\xi,\eta}_t\|_\8$ and
$\|\hat V^{\xi,\eta}_t\|_\8\le \| V^{\xi,\eta}_t\|_\8$, it follows
that
\begin{equation}\label{r5}
|h^{\xi,\eta}(t)|^2 \le c_\vv +4\,\|\si^{-1}\|_{\rm
HS}^2(4\,K_1^2+K_2^2( 1+\vv)\}(\| U^{\xi,\eta}_t\|_\8^2+\|
V^{\xi,\eta}_t\|_\8^2)
\end{equation}
for some $c_\vv>0$ and
\begin{equation*}
\nu_\vv:=4\,\|\si^{-1}\|_{\rm HS}^2\{(4\,K_1^2+K_2^2(
1+\vv)){\bf1}_{\{\aa=1\}}+2K_1^2{\bf1}_{\{\aa\in(0,1)\}}\}
\end{equation*}
Therefore, by virtue of \eqref{r5}   and Lemma \ref{le2}, \eqref{r7}
  holds true.
\end{proof}

Hereinafter, we proceed to finish the
\begin{proof}[{\bf Proof of Theorem \ref{th4}}]
Under the assumption ({\bf H1}), it is standard to show that
\begin{equation*}
\E\Big(\sup_{-\tau\le t\le
T}(|U^{\xi,\eta}(t)|^p+|V^{\xi,\eta}(t)|^p)\Big)\le
C_{p,T}(\|\xi\|_\8^p+\|\eta\|_\8^p).
\end{equation*}
This, combining H\"older's inequality  with BDG's inequality, leads
to
\begin{equation}\label{r9}
\sup_{0\le t\le T}\E\|U^{\xi,\eta}_t-\hat U^{\xi,\eta}_t
\|_\8^p+\E\|V^{\xi,\eta}_t-\hat V^{\xi,\eta}_t \|_\8^p\le
c\,\dd^{p\,\aa/2}.
\end{equation}
Thus, mimicking the argument of Theorem \ref{th1} we obtain the
desired assertion from \eqref{r9} and Lemma \ref{le3}.
\end{proof}

\section{Weak Convergence: Integrability Conditions}
In the previous sections, we investigated weak convergence of EM
schemes for path-dependent SDEs, where the irregular drifts are at
most linear growth. In this section we still focus on the topic upon
  weak convergence but for path-dependent SDEs under integrability
conditions, which might allow that the irregular drifts need not to
be
 linear growth.

  We start with some additional notation. Denote $ C^2(\R^d)$
 by the set of all continuously twice differentiable functions
 $f:\R^d\rightarrow\R$ and  $C_0^\8(\R^d)$ by the family of arbitrarily often differentiable functions $ f:\R^d\to\R$ with compact support. Let $\nn$ and
$\nn^2$ mean  the gradient operator and the Hessian operator,
respectively. Let $\mathscr{P}(\R^d)$ stand for the collection of
all probability measures on $\R^d.$  For
$\si\in\mathbb{M}_{\mathrm{non}}^{d}$ and $V\in C^2(\R^d)$ with
$\e^{-V}\in L^1(\d x)$ and $\mu_0(\d x):=C_V\e^{-V(x)}\d
x\in\mathscr{P}(\R^d), $ where $C_V$ is the normalization, set
  $Z_0:\R^d\rightarrow\R^d$ by
\begin{equation}\label{50}
Z_0(x):= -(\si\si^*) \nabla V(x),~~~~x\in\R^d.
\end{equation}
Thus, by the integration by parts formula, the operator
\begin{equation*}
\mathscr{L}_0f(x):=\frac{1}{2}\mbox{tr}((\si\si^*)\nn
^2f)(x)+\<Z_0(x),\nn f(x)\>,~~~~x\in\R^d,~~f\in C_0^\8(\R^d)
\end{equation*}
is symmetric on $L^2(\mu_0)$, i.e., for any $f,g\in C_0^\8(\R^d)$,
\begin{equation*}
\mathscr{E}_0(f,g):=\<f,\mathscr{L}_0g\>_{L^2(\mu_0)}=\<g,\mathscr{L}_0f\>_{L^2(\mu_0)}=-\<\si^*\nn
f,\si^*\nn g\>_{L^2(\mu_0)}.
\end{equation*}
Let $H_\si^{1,2}$ be the completion of $C_0^\8(\R^d)$ under
the Sobolev norm
\begin{equation*}
\|f\|_{H_\si^{1,2}}:=(\mu_0(|f|^2+|\si^*f|^2))^{1/2}.
\end{equation*}
Then, $(\mathscr{E}_0,H_\si^{1,2})$ is a symmetric Dirichlet form on
$L^2(\mu_0)$ and the associated Markov process can be constructed as
the solution to the following reference SDE
\begin{equation}\label{51}
\d Y(t)=Z_0(Y(t))\d t+\si \d W(t),~~~~t>0,~~~~Y(0)=x,
\end{equation}
where $W(t)$ is a $d$-dimensional Brownian motion defined on the
probability space $(\OO,\F,\P)$ with the filtration
$(\F_t)_{t\ge0}.$ Assume  that
\begin{enumerate}
\item[(\bf C1)] $Z_0:\R^d\rightarrow\R^d$ is Lipschitz continuous,
i.e., there exist  an $L_0$ such that
\begin{equation*}
|Z_0(x)-Z_0(y)|\le L_0|x-y|£¬~~~~x,y\in\R^d,
\end{equation*}
and there exists constants $C>0$ and $\bb\in\R$ such that
\begin{equation*}
2\<x,Z_0(x)\>\le C+\bb|x|^2,~~~x\in\R^d.
\end{equation*}

\end{enumerate}
Under ({\bf C1}), \eqref{51} has a unique solution
$(Y^x(t))_{t\ge0}$ with the initial value $Y^x(0)=x.$ Observe that
$\mu_0$ is the invariant probability  measure of the Markov
semigroup $P_tf(x):=\E f(Y^x(t))$,  $f\in\B_b(\R^d)$, the set of all
bounded measurable functions on  $\R^d.$

In this section, we consider the following path-dependent SDE
\begin{equation}\label{52}
\d X(t)=\Big\{Z_0(X(t))+\int_{-\tau}^0Z(X(t+\theta))\rho(\d
\theta)\Big\}\d t+\si\d W(t),~~~t\ge0,~~X_0=\xi,
\end{equation}
 where $\rho(\cdot)$ is a probability measure on $[-\tau,0].$ Under
the assumption \eqref{54} below, \eqref{52} admits a unique weak
solution by following exactly the argument of Lemma \ref{weak}. The
EM scheme associated with \eqref{52} is given by
\begin{equation}
\d X^{(\dd)}(t)=\Big\{Z_0(X^{(\dd)}(t_\dd))+\int_{-\tau}^0 Z(\hat
X^{(\dd)}_t(\theta))\rho(\d \theta)\Big\}\d t+\si\d W(t)
\end{equation}
with the initial value $X^{(\dd)}(\theta)=X(\theta)=\xi(\theta),
\theta\in[-\tau,0],$ where
\begin{equation*}
\hat
 X^{(\dd)}_t(\theta):= X^{(\dd)}((t+\theta)\wedge t_\dd),~
 ~\theta\in[-\tau,-0].
\end{equation*}
Analogously, we define
\begin{equation*}
\hat Y^\xi_t(\theta)= Y^\xi((t+\theta)\wedge t_\dd),~
~\theta\in[-\tau,-0],
\end{equation*}
where $Y^\xi$ was extended as in \eqref{a6}. Moreover, we set
\begin{equation*}
h_4^\xi(t):=\si^{-1}\Big\{ Z_0(Y^\xi(t)) - Z_0(Y^\xi(t_\dd))
-\int_{-\tau}^0  Z(\hat Y^\xi_t( \theta))\rho(\d \theta)\Big\}.
\end{equation*}

One of our main result in this section is as follows, which reveals
the weak convergence order of EM scheme for path-dependent SDEs
under an integrability condition.

\begin{thm}\label{th5}
{\rm Assume ({\bf C1}) holds and suppose further that there exists
$\kk>0$ such that
\begin{equation}\label{54}
\mu_0(\e^{\kk |Z(\cdot)|^2})<\8
\end{equation}
and that there exist $m\ge1,\aa\in(0,1]$ and $C>0$ such that
\begin{equation}\label{e8}
|Z(x)-Z(y)|\le C(1+|x|^m+|y|^m)|x-y|^\aa£¬~~~x,y\in\R^d.
\end{equation}
Then, there exists $C_{5,T}>0$ such that
\begin{equation}
|\E f(X^\xi(t))-\E f(X^{(\dd)}(t))|\le
C_{1,T}\,\dd^{\aa},~~\xi\in\C,~~f\in\B_b(\R^d),~~t\in[0,T]
\end{equation}
for any $T>0$ such that
\begin{equation}\label{e9}
1<\ff{\kk}{ 2(2\vee  d )\|\si^{-1}\|_{\rm
HS}^2T^2}\wedge\ff{\e^{-(1+\beta T)}}{32\,\|\si\|_{\rm HS}^2
\|\si^{-1}\|_{\rm HS}^2T^2}\wedge\ff{\kk}{(1\vee \ff{d}{2})T}.
\end{equation}
}
\end{thm}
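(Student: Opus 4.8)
The plan is to run the Girsanov perturbation machinery of Theorem \ref{th1}, now anchored at the explicitly solvable reference equation \eqref{51}, whose invariant measure $\mu_0$ is known in advance. Write $g^\xi(t):=\si^{-1}\int_{-\tau}^0 Z(Y^\xi(t+\theta))\rho(\d\theta)$, so that $g^\xi$ is the drift gap between \eqref{52} and \eqref{51}, while $h_4^\xi$ is the drift gap between \eqref{51} and its EM scheme. Define the weights $R_1^\xi(t)=\exp(\int_0^t\<g^\xi(s),\d W(s)\>-\ff12\int_0^t|g^\xi(s)|^2\d s)$ and $R_2^\xi(t)=\exp(-\int_0^t\<h_4^\xi(s),\d W(s)\>-\ff12\int_0^t|h_4^\xi(s)|^2\d s)$, and set $\d\Q_i^\xi=R_i^\xi(T)\d\P$. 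Once Novikov's condition is in force, Girsanov's theorem turns $Y^\xi$ into a weak solution of \eqref{52} under $\Q_1^\xi$ and into a weak solution of the EM scheme under $\Q_2^\xi$; weak uniqueness (Lemma \ref{weak}) then gives $|\E f(X^\xi(t))-\E f(X^{(\dd)}(t))|=|\E((R_1^\xi(T)-R_2^\xi(T))f(Y^\xi(t)))|\le\|f\|_\8\,\E|R_1^\xi(T)-R_2^\xi(T)|$, and I would expand the last expectation exactly as in \eqref{w9}.

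The crux---and the point where the integrability hypothesis \eqref{54} replaces the sublinear-growth bound \eqref{w7} of Section \ref{sec1}---is the exponential integrability needed both for Novikov and for the density factor. Since $Z$ is no longer of sublinear growth, Lemma \ref{novikov} does not apply directly; instead I would first use that $\rho$ is a probability measure together with Jensen's inequality to dominate $\int_0^T|g^\xi(t)|^2\d t$ by $\|\si^{-1}\|_{\rm HS}^2\int_0^T\!\int_{-\tau}^0|Z(Y^\xi(t+\theta))|^2\rho(\d\theta)\d t$, and likewise for $h_4^\xi$, the Lipschitz increment of $Z_0$ being absorbed by the linear-growth estimate of Lemma \ref{novikov}. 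A second application of Jensen on the finite measure $\rho(\d\theta)\,\d t$ reduces matters to bounding $\E\,\e^{\ll'|Z(Y^{\xi(0)}(s))|^2}=P_s\big(\e^{\ll'|Z(\cdot)|^2}\big)(\xi(0))$ for $s\in[0,T]$, where $\ll'=\ll T\|\si^{-1}\|_{\rm HS}^2$. Here the dimension-free Harnack inequality for the reference semigroup $P_t$ (valid because $Z_0$ is Lipschitz and $\si$ is a constant non-degenerate matrix) is decisive: writing it with power $p>1$ as $(P_s\phi(\xi(0)))^p\le P_s\phi^p(y)\,\e^{c_p(s)|\xi(0)-y|^2}$ and integrating against $\mu_0$, together with the invariance $\mu_0 P_s=\mu_0$, yields $(P_s\phi(\xi(0)))^p\le\mu_0(\phi^p)\big/\!\int\e^{-c_p(s)|\xi(0)-y|^2}\mu_0(\d y)$; choosing $\phi=\e^{\ll'|Z|^2}$ with $p\ll'=\kk$ makes the numerator $\mu_0(\e^{\kk|Z|^2})<\8$ finite by \eqref{54}.

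The main obstacle is the behaviour of this bound as $s\da0$: the Harnack constant $c_p(s)$ blows up like $1/s$, so the denominator degenerates like $s^{d/2}$ and $P_s\phi(\xi(0))\1 s^{-d/(2p)}$. To keep $\int_0^T P_s\phi(\xi(0))\,\d s$ finite one needs $p>d/2$, and this requirement, coupled with $p\ll'=\kk$ and $\ll'=\ll T\|\si^{-1}\|_{\rm HS}^2$, is precisely what forces the several lower bounds on $\kk/T$ and $\kk/T^2$ recorded in \eqref{e9}; the remaining factor $\e^{-(1+\bb T)}/T^2$ there is inherited from the $Z_0$ contribution handled by Lemma \ref{novikov}. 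I expect the careful tracking of $p$---it must simultaneously exceed $1$, exceed $d/2$, and stay close to the admissible threshold so that the density moment remains finite---to be the most delicate book-keeping.

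With exponential integrability secured, the remainder mirrors Theorem \ref{th1}. The density factor $\Gamma(T)=(\E(R_1^\xi(T))^q)^{1/q}+(\E(R_2^\xi(T))^q)^{1/q}$ is controlled for $q$ slightly above $1$ by the same Harnack estimate applied with a marginally larger exponent, the slack in \eqref{e9} leaving room to let $q\da1$. For the small factor I would bound $|g^\xi(s)+h_4^\xi(s)|$ by the Lipschitz increment $|Z_0(Y^\xi(s))-Z_0(Y^\xi(s_\dd))|$ plus the weighted H\"older increment furnished by \eqref{e8}, namely $\int_{-\tau}^0(1+|Y^\xi(s+\theta)|^m+|Y^\xi(s_\dd)|^m)|Y^\xi(s+\theta)-Y^\xi((s+\theta)\wedge s_\dd)|^\aa\rho(\d\theta)$; combining the time-regularity estimate $\E|Y^\xi(s)-Y^\xi(s')|^p\le c\,\dd^{p/2}$ (as in \eqref{a1}) with the exponential integrability of the polynomial weights $1+|Y^\xi|^m$, and inserting the outcome into the analogues of \eqref{w14}--\eqref{w15}, controls $\Theta_1(t)$ and $\Theta_2(t)$ and yields the asserted convergence order $\dd^\aa$.
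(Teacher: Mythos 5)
Your proposal follows essentially the same route as the paper: the same Girsanov perturbation anchored at the reference SDE \eqref{51}, the same reduction via Jensen's inequality to bounding $P_s\big(\e^{\ll'|Z|^2}\big)(\xi(0))$, the same use of the dimension-free Harnack inequality integrated against the invariant measure $\mu_0$ (with the $p>d/2$ threshold to absorb the $s^{-d/(2p)}$ singularity as $s\da 0$), and the same splitting of $h_4^\xi$ into a $Z_0$-Lipschitz part handled by Lemma \ref{novikov} and a $Z$-part handled by \eqref{54}; this is precisely the content of the paper's Lemma \ref{le4} and the concluding argument of Theorem \ref{th5}. The only cosmetic difference is that you keep the full Gaussian-weighted integral $\int\e^{-c_p(s)|\xi(0)-y|^2}\mu_0(\d y)$ in the denominator where the paper restricts to the ball $|x-y|^2\le 1-\e^{-L_0 s}$, which yields the same $s^{d/2}$ degeneracy.
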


\begin{proof}
From $\xi\in\C$ and \eqref{e8}, we infer from Lemma \ref{le4} bleow
that \eqref{e5} is available so that
\begin{equation}\label{e4}
\E\,\e^{(1+\vv)\int_0^T|\int_{-\tau}^0Z(Y^\xi(t+\theta))
\rho(\d\theta)|^2\d t}+\E\,\e^{(1+\vv)\int_0^T |h_4^\xi(t)|^2
 \d t}<\8
\end{equation}
for some $\vv\in(0,1)$ sufficiently small and $T>0$ such that
\eqref{e9}. Next, exploiting H\"older's inequality and taking
advantage of \eqref{s1}, \eqref{a1}, and \eqref{e8} enables us to
obtain that
\begin{equation}\label{e12}
\begin{split}
&\int_{-\tau}^0\E|Z(Y^\xi(t+\theta))-Z(\hat
Y^\xi_t(\theta))|^p\rho(\d\theta)\\
&\le\int_{-\tau}^0\E\Big(\sup_{t_\dd\le s\le t}|Z(Y^\xi(s))-Z(
Y^\xi(t_\dd))|^p{\bf1}_{\{t+\theta\ge t_\dd\}}\Big)\rho(\d\theta)\\
&\le c\,E\Big(\sup_{t_\dd\le s\le
t}(1+ |Z(Y^\xi(s))|^{pm}+|Y^\xi(t_\dd)|^{pm})|Y^\xi(s)-Y^\xi(t_\dd)|^{p\aa}\Big)\\
&\le c\,\dd^{p\aa/2}.
\end{split}
\end{equation}
With \eqref{e4} and \eqref{e11} in hand, the proof of Theorem
\ref{th5} can be done by following the line of Theorem \ref{th1}.
\end{proof}

\begin{rem}
The integrability condition \eqref{54} is explicit and verifiable
since the density of $\mu_0$ is explicitly given.  If $\mu_0$ is a
Gaussian measure (e.g., $V(x)=c\,|x|^2$ for some constant $c>0$) and
$Z:\R^d\to\R^d$ is H\"older continuous with the H\"older exponent
$\aa\in(0,1)$, then \eqref{54} holds definitely for any $\kk>0$.
Moreover, the linear growth of $Z$ imposed in Lemma \ref{novikov} is
an essential ingredient, whereas the integrability condition
\eqref{54} does not impose any growth condition on $Z$ and even allows $Z$ to be singular at certain setup, e.g., $Z(x)=(\log
\ff{1}{|x|^\aa}){\bf 1}_{\{|x|\le 1\}}+x{\bf 1}_{\{|x|> 1\}}$,
$x\in\R,$ for some $\aa\in(0,1)$.
\end{rem}

Via the dimension-free Harnack inequality (see e.g. \cite{W10,W11}),
we can establish the following exponential integrability under an
integrability condition, which is an essential ingredient in
analyzing weak convergence.
\begin{lem}\label{le4}
{\rm Assume that ({\bf C1}) and \eqref{54} hold. 
Then, for any $\ll,T>0$ such that
\begin{equation}\label{e4}
\E\,\e^{\ll\int_0^T|\int_{-\tau}^0Z(Y^\xi(t+\theta))
\rho(\d\theta)|^2\d t}<\8,~~~~\ll<\ff{\kk}{(1\vee \ff{d}{2})T}
\end{equation}
and
\begin{equation}\label{e6}
 \E\,\e^{\ll\int_0^T |h_4^\xi(t)|^2
 \d t}<\8
\end{equation}
whenever  $\ll,T>0$ such that
\begin{equation*}
 \ll<\ff{\kk}{2(2\vee  d
)\|\si^{-1}\|_{\rm HS}^2T^2}\wedge\ff{\e^{-(1+\beta
T)}}{32\,\|\si\|_{\rm HS}^2 \|\si^{-1}\|_{\rm
HS}^2L_0^2T^2}.
\end{equation*} }
\end{lem}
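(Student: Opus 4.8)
The plan is to reduce both exponential-integrability bounds to pointwise estimates on the reference semigroup $P_s g(x)=\E g(Y^x(s))$ evaluated at $x=\xi(0)$, and then to transfer these estimates to the invariant measure $\mu_0$ by means of the dimension-free Harnack inequality of \cite{W10,W11}, so that the hypothesis \eqref{54} can be exploited. Concretely, for the first bound I would apply Jensen's inequality twice — once to pass $|\int_{-\tau}^0 Z(\cdot)\rho(\d\theta)|^2\le\int_{-\tau}^0|Z(\cdot)|^2\rho(\d\theta)$ and once for the normalized time average $\ff1T\int_0^T(\cdot)\,\d t$ — to obtain
\[
\E\,\e^{\ll\int_0^T|\int_{-\tau}^0Z(Y^\xi(t+\theta))\rho(\d\theta)|^2\d t}\le\ff1T\int_0^T\!\!\int_{-\tau}^0\E\,\e^{\ll T|Z(Y^\xi(t+\theta))|^2}\rho(\d\theta)\,\d t .
\]
On the deterministic history $\{t+\theta<0\}$ the integrand equals $\e^{\ll T|Z(\xi(t+\theta))|^2}$, which is bounded by continuity of $\xi$ and $Z$; on $\{t+\theta\ge0\}$ it equals $P_{t+\theta}\big(\e^{\ll T|Z|^2}\big)(\xi(0))$. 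Everything thus comes down to bounding $P_s\big(\e^{\phi}\big)(x)$ with $\phi=\ll T|Z|^2$, uniformly for $s\in[0,T]$.

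The heart of the matter is to control $P_s\e^{\phi}(x)$ even though $Z$ (hence $\phi$) may be unbounded or singular, for which I would use the power form of the dimension-free Harnack inequality for \eqref{51}: for $p>1$,
\[
(P_s f(x))^p\le P_s(f^p)(y)\,\exp\!\Big(\Phi_p(s)\,\|\si^{-1}\|_{\rm HS}^2\,|x-y|^2\Big),\qquad f\ge0,
\]
where $\Phi_p(s)$ depends on $p$, $s$ and the constants in ({\bf C1}). Taking $f=\e^{\phi}$, integrating in $y$ against $\mu_0$, and then applying H\"older's inequality with exponents $(r,r')$ together with the conditional Jensen inequality $(P_s g)^r\le P_s(g^r)$ and the invariance identity $\int P_s(\cdot)\,\d\mu_0=\mu_0(\cdot)$, I would arrive at a bound of the shape
\[
P_s\e^{\phi}(x)\lesssim\mu_0\big(\e^{pr\phi}\big)^{1/(pr)}\Big(\int_{\R^d}\e^{\ff{r'p}{p-1}\Phi_p(s)\|\si^{-1}\|_{\rm HS}^2|x-y|^2}\mu_0(\d y)\Big)^{1/(pr')}.
\]
The first factor is finite exactly when $pr\,\ll T\le\kk$, by \eqref{54}; the second is a Gaussian-type integral against $\mu_0=C_V\e^{-V}\d x$, which converges only when the effective coefficient stays below the quadratic decay rate of $\e^{-V}$, the $d/2$-power of the resulting Gaussian normalization being the origin of the dimensional factor $1\vee\ff d2$. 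Optimizing over $p$ and the H\"older exponents then produces the first asserted range $\ll<\kk/((1\vee\ff d2)T)$.

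For the second bound I would split
\[
|h_4^\xi(t)|^2\le2\|\si^{-1}\|_{\rm HS}^2\Big(|Z_0(Y^\xi(t))-Z_0(Y^\xi(t_\dd))|^2+\Big|\int_{-\tau}^0Z(\hat Y_t^\xi(\theta))\rho(\d\theta)\Big|^2\Big),
\]
and factorize $\e^{\ll\int_0^T|h_4^\xi|^2\d t}$ through H\"older's inequality into the two corresponding exponentials. The $Z_0$-difference term is genuinely of linear growth: by the Lipschitz bound in ({\bf C1}) it is dominated by a constant multiple of $\|Y_t^\xi\|_\8^2$, so its exponential moment is controlled by Lemma \ref{novikov}, yielding the threshold $\e^{-(1+\beta T)}/(32\|\si\|_{\rm HS}^2\|\si^{-1}\|_{\rm HS}^2L_0^2T^2)$. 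The delay term is handled by the Harnack/invariance mechanism of the previous paragraph — using $\|\hat Y_t^\xi\|_\8\le\|Y_t^\xi\|_\8$ and the extra $\|\si^{-1}\|_{\rm HS}^2$ carried by $h_4^\xi$ — giving $\kk/(2(2\vee d)\|\si^{-1}\|_{\rm HS}^2T^2)$. The admissible $\ll$ is then the minimum of the two.

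I expect the main obstacle to be making the transfer from the fixed start $x=\xi(0)$ to the invariant measure quantitatively sharp. The difficulty is that the Harnack weight $\Phi_p(s)$ blows up like $1/s$ as $s\downarrow0$, so the Gaussian integral $\int\e^{c\,\Phi_p(s)\|\si^{-1}\|_{\rm HS}^2|x-y|^2}\mu_0(\d y)$ diverges for small $s$ and the naive pointwise bound is vacuous there; one must either absorb the short-time contribution into the time integral $\int_0^T$ so that the singularity becomes integrable, or treat small $s$ separately via the near-identity behaviour of $P_s$. Balancing this short-time behaviour against the fixed quadratic decay of $\mu_0$ is precisely what pins down the constants ($1\vee d/2$, $2\vee d$, the factor $32$) and the sharp dependence on $T$, $\kk$ and $\|\si^{-1}\|_{\rm HS}$; the residual estimates (the moment bounds \eqref{s1} and \eqref{a1}, and the Jensen/H\"older bookkeeping) are then assembled as in the proof of Theorem \ref{th1}.
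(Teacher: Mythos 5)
Your overall strategy coincides with the paper's: reduce both bounds via Jensen and H\"older to pointwise control of $\E\,\e^{\gg|Z(Y^{x}(s))|^2}$ at $x=\xi(0)$, obtain that control from the dimension-free Harnack inequality for \eqref{51} combined with the invariance of $\mu_0$ and the hypothesis \eqref{54}, and split $|h_4^\xi|^2$ into a Lipschitz $Z_0$-difference (handled by Lemma \ref{novikov}, giving the $\e^{-(1+\beta T)}/(32\|\si\|_{\rm HS}^2\|\si^{-1}\|_{\rm HS}^2L_0^2T^2)$ threshold) and a delay-$Z$ part (handled by the Harnack mechanism). That architecture is exactly right.

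The gap is in the step you yourself single out as the main obstacle, and neither of the escape routes you sketch actually works. After applying the Harnack inequality $(P_sf(x))^p\le P_sf^p(y)\exp\big(\ff{pL_0|x-y|^2}{2(p-1)(1-\e^{-L_0s})}\big)$ you integrate in $y$ over all of $\R^d$ and then H\"older-split, which leaves the factor $\big(\int_{\R^d}\e^{c\,\Phi_p(s)|x-y|^2}\mu_0(\d y)\big)^{1/(pr')}$ with $\Phi_p(s)\sim 1/s$. Under ({\bf C1}) the drift $Z_0=-\si\si^*\nn V$ is Lipschitz, so $V$ grows at most quadratically; hence this integral is not merely large but equal to $+\8$ for all $s$ small enough, the resulting pointwise bound is vacuous there, and ``absorbing the short-time contribution into $\int_0^T$'' is impossible because one would be integrating $+\8$ over a set of positive measure. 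The alternative of invoking the near-identity behaviour of $P_s$ also fails, since \eqref{54} permits $\e^{\gg|Z|^2}$ to be locally unbounded. The paper's fix is to integrate the Harnack inequality in $y$ only over the shrinking ball $\{|x-y|^2\le 1-\e^{-L_0s}\}$: there the Harnack weight is bounded by $\e^{pL_0/(2(p-1))}$ uniformly in $s$, no H\"older splitting and no Gaussian integral arise, and the only price is the small-ball mass, bounded below by $\ff{\pi^{d/2}}{\GG(1+d/2)}\,\e^{-(1+V(x))}\inf_{|y|\le1+|x|}\e^{-|\nn V|(y)}(1-\e^{-L_0s})^{d/2}$ via a first-order Taylor expansion of $V$ (this is \eqref{e2}). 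This yields $\E\,\e^{\gg|Z(Y^x(s))|^2}\le\Lambda_p(x)(1-\e^{-L_0s})^{-d/(2p)}\mu_0(\e^{\kk|Z(\cdot)|^2})^{1/p}$ for $p\gg<\kk$, and the singularity $s^{-d/(2p)}$ is integrable in $s$ precisely when $p>d/2$, which is where the factors $1\vee\ff d2$ and $2\vee d$ in the admissible range of $\ll$ originate. You need this ball-restriction (or an equivalent device) to make the transfer to $\mu_0$ quantitative; without it the argument does not close.
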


\begin{proof}
By H\"older's inequality and Jensen's inequality, it follows that
\begin{equation}
\begin{split}
\E\,\e^{\ll\int_0^T|\int_{-\tau}^0Z(Y^\xi(t+\theta))
\rho(\d\theta)|^2\d t}&\le \E\,\e^{\ll\int_0^T
\int_{-\tau}^0|Z(Y^\xi(t+\theta))|^2\rho(\d \theta) \d t}\\
&\le \ff{1}{T}\int_0^T\int_{-\tau}^0\E\,\e^{\ll T
|Z(Y^\xi(t+\theta))|^2}\rho(\d \theta)\d t\\
&\le\ff{1}{T} \Big\{\int_{-\tau}^0 \e^{\ll T |Z(\xi(\theta))|^2}\d
\theta+\int_0^T\E\,\e^{\ll T |Z(Y^{\xi(0)}(t))|^2}\d t\Big\}.
\end{split}
\end{equation}
If, for any $\gg>0$ and $p>(1\vee d/2)$ with $p\gg<\kk$, there
exists a continuous positive  function $x\mapsto \Lambda_p(x)$ such
that
\begin{equation}\label{e3}
\E\,\e^{\gg |Z(X^x(t))|^2}\le
\Lambda_p(x)(1-\e^{-L_0t})^{-d/2p}(\e^{\kk|Z(\cdot)|^2})^{1/p},
\end{equation}
 then \eqref{e4} holds true 
due to the facts that $1-\e^{-L_0t}\sim L_0 t$ as $t\rightarrow0$
and
\begin{equation*}
\int_0^ts^{-d/2p}\d s<\8,~~~~p>\ff{d}{2}.
\end{equation*}
In what follows, it remains to verify that \eqref{e3} holds.
According to \cite[Theorem 3.2]{W10} (see also \cite[Theorem
1.1]{W11}), the following dimension-free Harnack inequality
\begin{equation}\label{53}
(P_tf(x))^p\le P_tf^p(x)
\exp\bigg(\ff{pL_0|x-y|^2}{2(p-1)(1-\e^{-L_0t})}\bigg),~~~~x,y\in\R^d,~~f\in\B_b(\R^d),~~p>1
\end{equation}
holds.   For any $n,\gg>0$ and $p>(1\vee d/2)$ with $p\gg<\kk$,
applying the Harnack inequality \eqref{53} to the function $\R^d\ni
x\mapsto \e^{\gg|Z(x)|^2}\wedge n\in\B_b(\R^d)$ yields that
\begin{equation*}
\exp\bigg(-\ff{pL_0|x-y|^2}{2(p-1)(1-\e^{-L_0t})}\bigg)\Big(\E\,(\e^{\gg|Z(Y^x(t))|^2}\wedge
n)\Big)^p\le \E\Big(\e^{p\gg|Z(Y^y(t))|^2}\wedge
n^p\Big),~~~~x,y\in\R^d.
\end{equation*}
Thereby, integrating w.r.t. $\mu_0(\d y)$ on both sides and taking
the invariance of $\mu_0$ and \eqref{54} into consideration leads to
\begin{equation*}
\begin{split}
&\exp\Big(-\ff{pL_0}{2(p-1)}\Big)\int_{|x-y|^2\le1-\e^{-L_0t}
}\mu_0(\d y)(\E\,\e^{\gg|Z(Y^x(t))|^2}\wedge
n)^p\\
&\le\int_{\R^d}\E\Big(\e^{p\gg|Z(Y^y(t))|^2}\wedge n^p\Big)\mu_0(\d
y)\\
&\le\mu_0(\e^{p\gg|Z(\cdot)|^2}\wedge
n^p)\le\mu_0(\e^{\kk|Z(\cdot)|^2})<\8,~~~~x\in\R^d,~~p\gg<\kk.
\end{split}
\end{equation*}
 So, by the dominated
convergence theorem, we arrive at
\begin{equation}\label{e1}
\begin{split}
 \Big(\int_{|x-y|^2\le1-\e^{-L_0t} }\mu_0(\d
y)\Big)^{1/p}\E\,\e^{\gg|Z(Y^x(t))|^2}
\le(\mu_0(\e^{\kk|Z(\cdot)|^2}))^{1/p}\exp\Big(\ff{
L_0}{2(p-1)}\Big).
\end{split}
\end{equation}
Next,  from $\mu_0(\d y)=\e^{-V(y)}\d y$ and Taylor's expansion, we
deduce  that
\begin{equation}\label{e2}
\begin{split}
 \int_{|x-y|^2\le1-\e^{-L_0t} }\mu_0(\d y)&= \int_{|x-y|^2\le1-\e^{-L_0t} }\e^{-V(y)}\d
 y\\
 &\ge\e^{-V(x)}\int_{|z|^2\le1-\e^{-L_0t} }\e^{-\int_0^1|\nn V(x+\theta z)|\cdot|z|\d \theta }\d
 z\\
 &\ge\e^{-V(x)}\inf_{|y|\le 1+|x|}\e^{-|\nn V|(y)}\int_{|z|^2\le1-\e^{-L_0t} }\e^{-|z| }\d
 z\\
 &\ge\ff{\pi^{d/2}}{\GG(1+d/2)}\e^{-(1+V(x))}\inf_{|y|\le 1+|x|}\e^{-|\nn
 V|(y)}(1-\e^{-L_0t})^{d/2},
\end{split}
\end{equation}
where $\GG(\cdot)$ is the Gamma function.
 Thence, inserting \eqref{e2} back
into \eqref{e1} gives \eqref{e3}.

A direct calculation shows from ({\bf C1}) and H\"older's inequality
that
\begin{equation*}
|h_4^\xi(t)|^2\le 2\|\si^{-1}\|_{\rm HS}^2\Big\{2L_0^2(|
Y^\xi(t)|^2+ |Y^\xi(t_\dd)|^2)+\int_{-\tau}^0| Z(\hat
Y^\xi_t(\theta))|^2\rho(\d \theta)\Big\}.
\end{equation*}
Thus, H\"older's inequality implies that
\begin{equation*}
\begin{split}
\E\,\e^{\ll\int_0^T|h_4^\xi(t)|^2\d t}&\le\Big(\E\,\e^{16\ll
L_0^2\|\si^{-1}\|_{\rm HS}^2\int_0^T\|Y_t^\xi\|_\8^2\d
t}\Big)^{1/2}\Big(\E\,\e^{4\ll\|\si^{-1}\|_{\rm
HS}^2\int_0^T\int_{-\tau}^0|Z(\hat Y^\xi_t(\theta))|^2\rho(\d
\theta)\d t}\Big)^{1/2}\\
&=:\ss{I_1(T)}\times\ss{I_2(T)}.
\end{split}
\end{equation*}
On one hand, in view of \eqref{eq11}, it holds that
\begin{equation}\label{e7}
I_1(T)<\8,~~~~\ll<\ff{\e^{-(1+\beta T)}}{32\,\|\si\|_{\rm HS}^2
\|\si^{-1}\|_{\rm HS}^2L_0^2T^2}.
\end{equation}
On the other hand, the H\"older inequality and the Jensen inequality
shows that for any $\ll>0,$
\begin{equation*}
\begin{split}
&\E\,\e^{\ll\int_0^T\int_{-\tau}^0|Z(\hat Y^\xi_t(\theta))|^2
\rho(\d\theta)\d t} \\&\le
\ff{1}{T}\int_{-\tau}^0\int_0^T\E\,\e^{\ll T|
Z(\hat Y^\xi_t(\theta))|^2}\d t\rho(\d\theta)\\
&=\ff{1}{T}\ \int_{-\tau}^0\int_0^T\E\,\Big\{\e^{\ll
T|Z(Y^\xi(t+\theta))|^2}{\bf1}_{\{t+\theta\le t_\dd\}}+\e^{\ll
T|Z(Y^\xi(t_\dd))|^2}{\bf1}_{\{t+\theta> t_\dd\}}\Big\}\d t\rho(\d\theta)\\
&\le\ff{1}{T}\ \Big\{\int_{-\tau}^0\e^{\ll T|Z(\xi(\theta))|^2}\d
\theta+\e^{\ll T|Z(\xi(0))|^2}+\int_0^T\E\, \e^{\ll
T|Z(Y^\xi(t))|^2} \d t+ \int_\dd^T\E\,\e^{\ll T|Z(Y^\xi(t_\dd))|^2}
\d t \Big\}
\end{split}
\end{equation*}
so that, by virtue of 
\eqref{e3},
\begin{equation}\label{e11}
I_2(T)<\8,~~~~\ll<\ff{\kk}{2(2\vee  d )\|\si^{-1}\|_{\rm HS}^2T^2}.
\end{equation}
Thus, \eqref{e6} follows 
\eqref{e7} as well as
\eqref{e11} immediately.
\end{proof}


\begin{thebibliography}{17}
{\small

\setlength{\baselineskip}{0.14in}
\parskip=0pt


\bibitem{Bach} Bachmann, S.,  Well-posedness and stability for a class of
stochastic delay differential equations with singular drift, {\it
Stoch. Dyn.}, {\bf18} (2018), no. 2, 1850019, 27 pp.

\bibitem{Bachm} Bachmann, S., On the Strong Feller Property of Stochastic Delay Differential Equations with Singular
Drift,  arXiv:1709.05648v2.


\bibitem{BT}Bally, V.,  Talay, D.,   The law of the Euler scheme for
stochastic differential equations: I. Convergence rate of the
distribution function, {\it Probab. Theory Relat. Fields.}, {\bf104}
(1996), 43--60.

\bibitem{BKMS}Buckwar, E., Kuske, R., Mohammed,
S.-E., Shardlow, T., Weak convergence of the Euler scheme for
stochastic differential delay equations, {\it LMS J. Comput. Math.},
{\bf11} (2008), 60--99.

\bibitem{BS} Buckwar, E.,  Shardlow, T., Weak approximation of stochastic
differential delay equations, {\it IMA J. Numer. Anal.}, {\bf25}
(2005),   57--86.

\bibitem{CKL} Cl\'{e}ment, E., Kohatsu-Higa, A., Lamberton, D.,   A duality
approach for the weak approximation of stochastic differential
equations, {\it Ann. Appl. Probab.}, {\bf16} (2006), 1124--1154.



 \bibitem{DKS} Dareiotis, K., Kumar, C., Sabanis, S., On
 tamed Euler approximations of SDEs driven by L\'{e}vy noise with
 applications to delay equations,
  {\it SIAM J. Numer. Anal.}, {\bf 54} (2016),  1840--1872.


\bibitem{GR}  Gy\"ongy, I., R\'{a}sonyi, M., A note on Euler approximations for SDEs with H\"older continuous diffusion
coefficients,
 {\it Stochastic Process. Appl.}, {\bf121} (2011),  2189--2200.






\bibitem{IW} Ikeda, N.,    Watanabe, S.,   \emph{Stochastic Differential
Equations and Diffusion Processes}, North-Holland, 1989.

\bibitem{KP1}Kloeden, P. E., and Platen, E. Numerical solution of stochastic differential equations,   Applications of Mathematics (New York) (23). Springer-Verlag, Berlin, 1992.


\bibitem{KP2}K\"uchler, U., Platen, E., Weak
discrete time approximation of stochastic differential equations
with time delay, {\it Math. Comput. Simulation}, {\bf59} (2002),
497--507.

\bibitem{M08} Mao, X., \emph{Stochastic Differential Equations and Applications},
Horwood, England, Second Edition, 2008.

\bibitem{Mao15}Mao, X., The truncated Euler--Maruyama method for stochastic
differential equations, {\it J. Comput. Appl. Math.}, {\bf  290}
(2015), 370--384.

\bibitem{MSH}   Mattingly, J.~C., Stuart, A.~M., Higham, D.~J., Ergodicity for SDEs and approximations: locally Lipschitz vector
fields and degenerate noise,
 {\it Stochastic Process. Appl.}, {\bf101} (2002),   185--232.


\bibitem{MP}Mikulevicius, R., Platen, E.,   Rate of convergence of the
Euler approximation for diffusion processes,  {\it Math. Nachr.},
{\bf151} (1991), 233--239.

\bibitem{M84}  Mohammed, S.-E. A., \emph{Stochastic Functional Differential
Equations}, Pitman, Boston, 1984.


\bibitem{NT} Ngo, H.-L., Taguchi, D., Strong rate of convergence for the Euler-Maruyama approximation of stochastic differential equations
with irregular coefficients,
 {\it Math. Comp.}, {\bf85} (2016),  1793--1819.





\bibitem{NT1} Ngo, H.-L., Taguchi, D., On the Euler-Maruyama approximation for one-dimensional
stochastic differential equations with irregular coefficients, {\it
IMA J.   Numer. Anal.}, {\bf 37} (2017), 1864--1883.

\bibitem{PT} Pamen, O. M., Taguchi, D., Strong rate of convergence for the Euler-Maruyama
approximation of SDEs with H\"older continuous drift coefficient,
{\it Stochastic Process. Appl.}, {\bf 127} (2017),  2542--2559.


 \bibitem{NT}Ngo, H.-L., Taguchi, D., Approximation for non-smooth functionals of stochastic differential equations with irregular
 drift,
  {\it J. Math. Anal. Appl.}, {\bf457} (2018),   361--388.


\bibitem{VS}
von Renesse, M.-K., Scheutzow, M., Existence and uniqueness of
solutions of stochastic functional differential equations, {\it
Random Oper. Stoch. Equ.}, {\bf18} (2010),  267--284.




\bibitem{S94}
Soize, C.,  \emph{The Fokker-Planck equation for stochastic
dynamical systems and its explicit steady state solutions}, World
Scientific Publishing Co., Inc., River Edge, NJ, 1994.


\bibitem{TT}Talay, D., Tubaro, L.,   Expansion of the global error for
numerical schemes solving stochastic differential equations, {\it
Stochastic Anal. Appl.}, {\bf8} (1990), 94--120.


\bibitem{Wanga}Wang, F.-Y., Integrability conditions for SDEs and
semilinear SPDEs, {\it Ann. Probab.}, {\bf 45} (2017), 3233-3265.


\bibitem{Wang} Wang, F.-Y., Estimates for invariant probability
measures of degenerate SPDEs with sigular and path-dependent drifts,
Probab, {\it Theory Relat. Fields},
https://doi.org/10.1007/s00440-017-0827-4.

\bibitem{W14}   Wang, F.-Y.,  Hypercontractivity for Stochastic Hamiltonian
Systems,  {\it J. Funct. Anal.}, {\bf272} (2017),  5360--5383.


\bibitem{W10}   Wang, F.-Y., Coupling and Applications,
arXiv:1012.5687.
\bibitem{W11} Wang, F.-Y.,
Harnack inequality for SDE with multiplicative noise and extension
to Neumann semigroup on nonconvex manifolds, {\it Ann. Probab.},
{\bf39} (2011),   1449--1467.

\bibitem{WZ}Wang, F.-Y., Zhang, X., Degenerate SDE with H\"older-Dini
Drift and Non-Lipschitz Noise Coefficient, {\it SIAM J. Math.
Anal.}, {\bf48} (2016),   2189--2226.


\bibitem{Zhang}  Zhang, H.,  Weak approximation of stochastic differential delay equations for bounded measurable function,
{\it LMS J. Comput. Math.}, {\bf16} (2013), 319--343.

\bibitem{Zhang}   Zhang, X., Stochastic flows and Bismut formulas for stochastic
Hamiltonian systems, {\it Stoch. Proc. Appl.}, {\bf 120} (2010),
1929--1949.

\bibitem{AZ} Zvonkin, A.~K.,   A transformation of the phase space of a diffusion process that removes the drift,  {\it Math. Sb. },
 93  (1974), 129-149.


 }
\end{thebibliography}
\end{document}